\newtheorem{theorem}{Theorem}[section]
\newtheorem{lemma}[theorem]{Lemma}
\newtheorem{remark}[theorem]{Remark}
\newtheorem{definition}[theorem]{Definition}
\newtheorem{corollary}[theorem]{Corollary}
\newtheorem{proposition}[theorem]{Proposition}
\def\thm@space@setup{%
	\thm@preskip=\parskip \thm@postskip=0pt
}
\newcommand{\RR}{\ensuremath{\mathbb{R}}}
\newcommand{\E}{\ensuremath{\mathbb{E}}}
\newcommand{\W}{\ensuremath{\mathcal{W}}}
\newcommand{\supp}{\textnormal{supp}}
\newcommand{\dx}{\,\mathrm{d}}
\newcommand{\tT}{\mathrm{T}}
\newcommand{\cmark}{\ding{51}}
\newcommand{\xmark}{\ding{55}}
\DeclareMathOperator*{\argmin}{argmin}
\DeclareMathOperator*{\argmax}{argmax}
\DeclareMathOperator*{\diag}{diag}
\newcommand{\Wtwo}{\mathcal{W}_2}
\newcommand{\sumitoN}{\sum_{i=1}^N}
\newcommand{\sumitoNlambdai}{\sumitoN \lambda_i}
\newcommand{\mean}{M_{\lambda, \pi}^p}
\newcommand{\mmean}{G_{\lambda, \pi}^p}
\newcommand\curwidth{0.38\textwidth}
\numberwithin{equation}{section} 
\newif\iflong
\title{Simple Approximative Algorithms for Free-Support Wasserstein Barycenters}
\author{Johannes von Lindheim\thanks{Institute of Mathematics,
		Technische Universit\"at Berlin,
		Strasse des 17. Juni 136, 10587 Berlin, Germany,
		vonlindheim@tu-berlin.de}}
\date{\today}
\begin{document}
	
\maketitle

\begin{abstract}
	Computing Wasserstein barycenters of discrete measures has recently attracted 
	considerable attention due to its wide variety of applications in data science.
	In general, this problem is NP-hard, calling for practical approximative algorithms.
	In this paper, we analyze a well-known simple framework for approximating Wasserstein-$p$ barycenters, where we mainly consider the most common case $p=2$ and $p=1$, which is not as well discussed.
	The framework produces sparse support solutions and shows good numerical results in the free-support setting.
	Depending on the desired level of accuracy, this requires only $N-1$ or $N(N-1)/2$ standard two-marginal optimal transport (OT) computations between the $N$ input measures, respectively,
	which is fast, memory-efficient and easy to implement using any OT solver as a black box.
	What is more, these methods yield a relative error of at most $N$ and $2$, respectively, for both $p=1, 2$.
	We show that these bounds are practically sharp.
	In light of the hardness of the problem, it is not surprising that such guarantees cannot be close to optimality in general.
	Nevertheless, these error bounds usually turn out to be drastically lower for a given particular problem in practice and can be evaluated with almost no computational overhead, in particular without knowledge of the optimal solution.
	In our numerical experiments, this guaranteed errors of at most a few percent.
\end{abstract}

\noindent\textbf{Keywords} \quad
Optimal transport, discrete Wasserstein barycenter, approximative algorithm, error analysis, sparsity

\noindent\textbf{Mathematics Subject Classification} \quad
65D18, 68U10, 90B80

\section{Introduction}

Wasserstein barycenters are an increasingly popular application of optimal transport in data science \cite{AC11barycenters,AC11barycenters,PRV20oncomputation}.
They have nice mathematical properties, since they are the Fr\'echet means with respect to the Wasserstein distance \cite{frechetpersistence14,frechettemplates05T,frechet19procrustes}.
Their applications range from mixing textures \cite{HLPR21gotex,texturemix11}, 
stippling patterns and bidirectional reflectance distribution functions \cite{BPPH11networksimplex},
or color distributions and shapes \cite{convolutional15SPCetal} over averaging of sensor data \cite{sensors20H} to Bayesian statistics \cite{bayes18}, just to name a few.
For further reading, we refer to the surveys \cite{stataspects19PZ,PC19book}.

Unfortunately, Wasserstein barycenters
are in general hard to compute \cite{AB21nphard}.
Many algorithms restrict the support of the solution to a fixed set and minimize only over the weights.
Such methods include projected subgradient \cite{CD14fast}, 
iterative Bregman projections \cite{BCC15IBP}, (proximal) algorithms 
based on the latter \cite{complexitybarycenters19KTDDGU}, 
interior point methods \cite{GWXY19MAAIPM}, 
Gauss-Seidel based alternating direction of multipliers \cite{LJDK21linear}, 
multi-marginal Sinkhorn algorithms and its accelerated variants \cite{lin2019complexity}, 
debiased Sinkhorn barycenter algorithms \cite{JCG20debiased}, 
methods using the Wasserstein distance on a tree \cite{TSKRY21treesliced}, 
accelerated Bregman projections \cite{LHXCJ20fastIBP} and  methods based on mirror proximal maps or  
on a dual extrapolation scheme \cite{DT21complexitybounds}, among others.
While iterative Bregman projections are a standard benchmark that are hard to beat in terms of simplicity and speed, fixed-support methods applied on a grid suffer from the curse of dimensionality.

On the other hand, barycenters without such restriction are called free-support barycenters.
This approach can overcome the curse of dimensionality, since the optimal solution is sparse.
Free-support barycenters can be computed directly from the solution of the closely related multi-marginal optimal transport (MOT) problem.
The latter was originally introduced in \cite{GS98} in the continuous setting for squared Euclidean costs and further generalized in various ways, e.g., to entropy regularized \cite{flows19BCN,tree21HRCK} and unbalanced variants with non-exact marginal constraints \cite{UMOT21BLNS}.
The solution to MOT can be obtained by solving a linear program (LP) that unfortunately scales exponentially in $N$, however \cite{ABM16brute}.
Although there are exact polynomial-time methods for measures on $\RR^d$ for fixed $d$ \cite{AB21fixedd}, 
see also LP-based methods in \cite{ABM16brute,improvedLP20BP,columngen22BP},
these are not necessarily fast in practice and rather involved to implement. 
A remedy is to resort to approximative approaches, which include so far a Newton-approach that iteratively alternates between optimizing over the weights and supports \cite{CD14fast}, another LP-based method \cite{B20LPapprox}, an inexact proximal alternating minimization method \cite{iPAM21QP}, an iterative stochastic algorithm \cite{stoch18CCS} and the iterative swapping algorithm \cite{PRV20oncomputation}.
A free-support barycenter method based on the Frank--Wolfe algorithm is given in \cite{LSPC2019frankwolfe}.
Another method  in \cite{cont20LGYS} computes continuous barycenters using another way of parameterizing them.
For approaches for MOT similar to this paper, see \cite{mot2022vL}.
Further speedups can be obtained by subsampling the given measures \cite{HMZ20randomized} or dimensionality reduction of the support point clouds \cite{ISS21dimreduction}.

Despite the plethora of literature, many algorithms with low theoretical computational complexity or high accuracy solutions are rather involved to implement.
This impedes its actual usage and further research in practice.
To the best of our knowledge, there does not exist an algorithm that fulfills the following list of desiderata in the free-support setting:
\begin{itemize}
	\item simple to implement,
	\item sharp theoretical error bounds,
	\item sparse solutions, and
	\item good numerical results in practice.
\end{itemize}

The purpose of this paper is to show that all of these points can be achieved using one iteration of a simple well-known fixed-point algorithm, which only requires some off-the-shelve two-marginal OT solver as ingredients to its otherwise easy implementation.
Here we consider the cases $p=2$ and $p=1$, where the latter has received less attention in the literature so far.
One such fixed-point iteration consists in computing optimal transport plans from a given measure to the input measures and pushing each atom to the $p$-barycenter of its target locations.
For the cost of $N-1$ OT plans, this yields a relative error bound of $N$, or a $2$-approximation, respectively, when averaging over these results, which requires to solve $N(N-1)/2$ OT problems.
The key to these theoretical bounds is based on the observation that the they are already fulfilled for the input measures or their mixture, respectively, which we choose as initialization.
On the other hand, we show that the aforementioned fixed-point iteration guarantees to at least retain the current approximation quality, but improves it considerably in practice in the first step.

Note that other algorithms with an upper error bound of $2$ have been proposed in \cite{B20LPapprox} for $p=2$.
The basic algorithm produces a barycenter with support $\cup_{i=1}^N\supp(\mu^i)$ by solving an LP over its weights.
However, while this support choice leads to bad approximations in practice (consider, e.g., two distinct Dirac measures as input), for a merely theoretical $2$-approximation, no computation is necessary as mentioned above.
On the other hand, the implementation and proofs of the other algorithms in that paper with better results in practice are rather involved.

In view of the hardness of the Wasserstein barycenter problem \cite{AB21nphard}, it is clear that the derived relative error bounds cannot be close to $1$ for every set of inputs, unless P = NP.
However, the improvement made by one iteration is straightforward to evaluate in the proposed algorithms, such that it can output relative error bounds specific to the given problem without knowing the optimal solution.
We observe these resulting improved bounds to be close to $1$ in the numerical experiments.

This paper is organized as follows:
We introduce the Wasserstein barycenter problem and our notation in Section~\ref{sec:bary}.
In Section~\ref{sec:algorithms}, we state the algorithms considered in this paper.
In Section~\ref{sec:analysis}, we analyze their worst-case relative error.
In Section~\ref{sec:numerics}, we provide a comparison with other algorithms on a synthetic data set, a numerical exploration of Wasserstein-$1$ barycenters, and two applications of the discussed framework.
Concluding remarks are given in Section~\ref{sec:discussion}.

%---------------------------------------------------------------------------
\section{Wasserstein Barycenter Problem} \label{sec:bary}
%---------------------------------------------------------------------------
%
In the following, we denote by $\Vert \cdot \Vert$ the Euclidean norm on $\RR^d$ and by
$\mathcal P(\RR^d)$ the space of probability measures on $\mathbb R^d$.
Let $1\leq p < \infty$.
For two discrete measures
\[
\mu^1 = \sum_{k=1}^{n_1} \mu^1_k \delta (x^1_k), \quad 
\mu^2 = \sum_{l=1}^{n_2} \mu^2_l \delta (x^2_l), 
\]
the \emph{Wasserstein-$p$ distance} 
is defined by
\[
\W_p^p(\mu^1, \mu^2) 
= 
\min_{\pi\in\Pi(\mu^1, \mu^2)} \langle c_p, \pi \rangle,
\]
where $\langle c_p, \pi\rangle = \smash{\int_{\RR^d\times\RR^d}}c_p \dx \pi $ with $c_p(x,y) \coloneqq \Vert x-y\Vert^p$ and $\Pi(\mu^1, \mu^2)$ denotes the set of probability measures on $\RR^d \times \RR^d$
with marginals $\mu^1$ and $\mu^2$.
The above optimization problem is convex, but can have multiple minimizers $\pi$.

In this paper, we are given $N$ discrete probability measures $\mu^i\in \mathcal P(\RR^d)$ supported at $\supp(\mu^i) = \{ x^i_1, \dots, x^i_{n_i}\}$, where the $x^i_l$ are pairwise different for every $i$, i.e., 
\begin{equation}\label{eq:mu_def}
\mu^i= \sum_{l=1}^{n_i} \mu^i_l \delta(x^i_l),\quad i=1, \dots, N.
\end{equation}
Let $\Delta_{N} \coloneqq \{\lambda \in (0,1)^N: \sumitoNlambdai =1\}$ denote the open probability simplex.
For given weights 
$\lambda = (\lambda_1,\ldots,\lambda_N) \in \Delta_{N}$, 
we are interested in the computation of Wasserstein barycenters, which are the solutions to the optimization problem
\begin{equation}\label{eq:bary}
\min_{\nu \in \mathcal P(\RR^d)} \Psi_p(\nu) , \qquad 
\Psi_p(\nu) \coloneqq \sumitoNlambdai \W_p^p(\nu, \mu^i).
\end{equation}

The following theorem restates important results from \cite[Prop.~3]{CE10teams}, which connects barycenter problems with what is nowadays known as multi-marginal optimal transport, as well as \cite[Prop.~1, Thm.~2]{ABM16brute} and \cite[Thm.~1]{sparsity22FP} in our notation.

%----------------------------------------------------------------------
\begin{theorem}\label{thm:basic}
	The barycenter problem \eqref{eq:bary} has at least one optimal solution $\hat\nu$.
	Every optimal solution $\hat\nu$ fulfills
	\begin{equation}\label{eq:support_discrete}
	\supp(\hat\nu) \subseteq \Big\{ \sumitoNlambdai x^i : x^i\in \supp(\mu^i), \, i=1, \dots, N \Big\}.
	\end{equation}
	Moreover, there exists an optimal solution $\hat\nu$, such that
	\begin{equation}\label{eq:sparsity}
		\#\supp(\hat\nu) \leq \sumitoN n_i - N + 1.
	\end{equation}
\end{theorem}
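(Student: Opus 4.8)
The plan is to reduce the barycenter problem \eqref{eq:bary} to a multi-marginal optimal transport (MOT) problem and to read off all three assertions from it; I treat the case $p=2$, for which \eqref{eq:support_discrete} describes exactly the attained weighted means. Introduce the surrogate cost
\[
c_\lambda(x^1,\dots,x^N) \coloneqq \min_{y\in\RR^d}\ \sumitoNlambdai \|y-x^i\|^2,
\]
whose pointwise minimizer is the weighted mean $B(x^1,\dots,x^N) \coloneqq \sumitoNlambdai x^i$; since $y\mapsto\sumitoNlambdai\|y-x^i\|^2$ is strictly convex, this minimizer is unique, a fact I use crucially below. The first claim is that
\[
\min_{\nu\in\mathcal P(\RR^d)} \Psi_2(\nu) = \min_{\gamma\in\Pi(\mu^1,\dots,\mu^N)} \int c_\lambda \dx\gamma.
\]

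For the inequality ``$\geq$'', given any $\nu$ I pick optimal plans $\pi^i\in\Pi(\nu,\mu^i)$ and glue them along their common marginal $\nu$ (gluing lemma) into a measure $\hat\gamma$ on $(\RR^d)^{N+1}$ with coordinates $(y,x^1,\dots,x^N)$; its last-$N$ marginal $\gamma$ lies in $\Pi(\mu^1,\dots,\mu^N)$ and $\Psi_2(\nu) = \int \sumitoNlambdai\|y-x^i\|^2 \dx\hat\gamma \geq \int c_\lambda \dx\gamma$. For ``$\leq$'', given $\gamma$ the pushforward $\nu \coloneqq B_\#\gamma$ admits the admissible plans obtained by coupling $B$ with each coordinate, whence $\Psi_2(\nu)\leq\int c_\lambda\dx\gamma$. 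Because $\Pi(\mu^1,\dots,\mu^N)$ is a compact polytope in $\RR^{n_1\times\cdots\times n_N}$ and $\gamma\mapsto\int c_\lambda\dx\gamma$ is linear, an optimal $\gamma^\star$ exists, and $B_\#\gamma^\star$ is then an optimal barycenter; this settles existence.

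For the support claim, let $\hat\nu$ be any optimizer and run the same gluing to obtain $\hat\gamma$ with first marginal $\hat\nu$ and last-$N$ marginal $\gamma$. The equality of the two minima forces the single inequality $\int \sumitoNlambdai\|y-x^i\|^2 \dx\hat\gamma \geq \int c_\lambda \dx\gamma$ to be an equality, so
\[
\int\Big(\sumitoNlambdai\|y-x^i\|^2 - c_\lambda(x^1,\dots,x^N)\Big)\dx\hat\gamma = 0.
\]
The integrand is nonnegative by definition of $c_\lambda$, hence vanishes $\hat\gamma$-almost everywhere; by uniqueness of the minimizer this means $y=B(x^1,\dots,x^N)$ holds $\hat\gamma$-a.e. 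Consequently $\hat\nu = B_\#\gamma$, and since $\supp(\gamma)\subseteq\supp(\mu^1)\times\cdots\times\supp(\mu^N)$, its support lies in $\{\sumitoNlambdai x^i : x^i\in\supp(\mu^i)\}$, which is \eqref{eq:support_discrete}.

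Finally, for sparsity it suffices by the above to exhibit one optimal MOT plan with few atoms. The polytope $\Pi(\mu^1,\dots,\mu^N)$ is cut out by the $\sumitoN n_i$ marginal equalities, among which exactly $N-1$ are redundant (each marginal carries unit mass), leaving $\sumitoN n_i - N + 1$ independent constraints; a linear objective attains its minimum at a vertex, and a vertex has at most as many nonzero entries as there are independent equalities. Pushing such a vertex $\gamma$ forward under $B$ yields an optimal $\hat\nu$ with at most $\sumitoN n_i - N + 1$ atoms, which is \eqref{eq:sparsity}. I expect the main difficulty to lie in the support step: making the gluing measurable and rigorously upgrading the vanishing-integral identity to the pointwise almost-everywhere statement $y=B(x^1,\dots,x^N)$, which is exactly where strict convexity (and hence the restriction to $p=2$) enters and what promotes the conclusion from \emph{some} optimizer to \emph{every} optimizer.
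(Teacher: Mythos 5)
Your argument is essentially correct for $p=2$, but it takes a genuinely different route from the paper: the paper does not prove this theorem from scratch at all --- it cites the equivalence with multi-marginal optimal transport from Carlier--Ekeland, the vertex/sparsity count from Anderes--Borgwardt--Miller for $p=2$, and a recent sparsity result for MOT with general costs to cover general $p\geq 1$. You instead carry out the whole chain yourself: the two-sided reduction $\min_\nu\Psi_2(\nu)=\min_{\gamma\in\Pi(\mu^1,\dots,\mu^N)}\int c_\lambda\dx\gamma$ via gluing and pushforward under $B$, equality in the glued inequality plus strict convexity to pin down $y=B(x^1,\dots,x^N)$ $\hat\gamma$-a.e.\ for \emph{every} optimizer, and the basic-feasible-solution count $\sumitoN n_i-(N-1)$ for the transportation polytope. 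All of these steps are sound; in the discrete setting the gluing you worry about is completely explicit (it is exactly the product kernel $\prod_i\pi^i_{k,l_i}/\nu_k^{N-1}$ that the paper itself uses in the proof of its lower bound on $\Psi_p(\hat\nu)$), so the ``main difficulty'' you flag is not actually a difficulty here. What your self-contained proof buys is transparency; what the paper's citation route buys is coverage of general $p$.

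The one substantive shortfall is that the theorem is stated for arbitrary $1\leq p<\infty$, and the paper's proof explicitly handles general $p$ by invoking the MOT sparsity result for general cost functions, whereas your argument is locked to $p=2$ in two places: the surrogate cost for general $p$ is $\min_y\sumitoNlambdai\|y-x^i\|^p$, whose minimizer is no longer the weighted arithmetic mean (so \eqref{eq:support_discrete} as written is really a $p=2$ statement, arguably an imprecision of the paper itself), and it need not be unique (for $p=1$ and collinear points the minimizing set is a segment), which breaks your ``every optimizer'' step. Your existence and sparsity arguments do survive for general $p$ once $c_\lambda$ and $B$ are replaced by their $p$-analogues with a measurable selection of minimizers, but the support claim for all optimizers would need a separate argument. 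If you state clearly that you prove the theorem for $p=2$ and defer the general case, your proof is complete and correct.
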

\begin{proof}
	Note that \eqref{eq:support_discrete} is straightforward to obtain from the relation to multi-marginal optimal transport \cite[Prop.~3]{CE10teams}.
	In the special case $p=2$, the results from \cite{ABM16brute}, in particular \eqref{eq:sparsity}, can readily be generalized to arbitrary $\lambda\in \Delta_N$.
	For general $p\geq 1$ and barycenter problems with even more general cost functions, this follows from sparsity of multi-marginal optimal transport recently shown in \cite[Thm.~1]{sparsity22FP} in combination with \cite[Prop.~3]{CE10teams}.
\end{proof}

%----------------------------------------------------------------------

In particular, the theorem says that finding optimal Wasserstein barycenters is a discrete optimization problem over the weights of its finite support, which is contained in the convex hull of the supports of the $\mu^i$.
However, the number of possible support points scales exponentially in $N$.

%---------------------------------------------------------------------------
\section{Algorithms for Barycenter Approximation}\label{sec:algorithms}
%---------------------------------------------------------------------------

In this section, after motivating the main framework considered in this paper in Section~\ref{sec:motivation}, we discuss two more concrete configurations of it in Sections~\ref{sec:ref_alg} and \ref{sec:pairwise_alg}.

\subsection{Motivation}\label{sec:motivation}

In its core, the algorithms in this paper approximate barycenters by ``averaging optimal transport plans'' from a particular reference measure to the input measures in some sense.
This approach is well-known and comes in various flavors in the literature.
For example, it can be viewed through the lens of generalized geodesics in Wasserstein spaces \cite{gradflows08AGS} and
recent literature on linear optimal transport and relatives
\cite{GWLOT21BBS,HK-LOT22CCST,LOT13WSBOR,LOT20MDC,LOT21MC}.
On the other hand, in \cite{CD14fast}, one of the first papers on the numerical approximation of Wasserstein barycenters, the idea is presented as a Newton iteration.
The same iteration is analyzed in the continuous setting in \cite{ABC16fixedpoint}, and it can be used as a characterization of Wasserstein barycenters in terms of fixed points of this procedure, even for uncountably many input measures \cite{avgOTmaps18BK}.
See also \cite{weakOT21CTF} for this algorithm in the context of weak optimal transport.

Let us define the averaging of transport plans more precisely.
\begin{definition}
	Given a discrete measure $\nu = \sum_{k=1}^{n_\nu}\nu_k \delta(y_k) \in \mathcal P(\RR^d)$
	and transport plans
	\begin{equation}
	\pi^i \coloneqq \sum_{k=1}^{n_\nu}\sum_{l=1}^{n_i} \pi^i_{k, l} \delta( y_k, x_l^i) \in \Pi (\nu, \mu^i), \quad i=1, \dots, N,
	\end{equation}
	set $\pi=(\pi^1, \dots, \pi^N)$ and let for $k=1, \dots, n_\nu$, $p\geq 1$, the barycentric map $\mean\colon \supp(\nu)\to \RR^d$ be defined as
	\begin{equation}
	m_k = \mean(y_k) \coloneqq \argmin_{m\in \RR^d}\sumitoNlambdai \sum_{l=1}^{n_i} \frac{\pi^i_{k, l}}{\nu_k} \Vert m -  x_l^i \Vert^p.
	\end{equation}
	Furthermore, we define the mapping
	%	the measure-valued map $\mmean\colon \mathcal P(\RR^d)\to \mathcal P(\RR^d)$ as
	\begin{equation}\label{eq:mmean}
	\mmean(\nu) \coloneqq \sum_{k=1}^{n_\nu} \nu_k \delta(m_k).
	\end{equation}
\end{definition}
That is, each atom $y_k$ in the measure $\nu$ is pushed to the weighted barycenter $m_k$ of its target locations $x_l^i$, where the weights are given by the $\lambda_i$ and the weights of the source locations as given by the transport plans $\pi^i$, relative to the corresponding transported mass $\nu_k$.

Note that for $p=2$, the map $\mean$ is the classical mean, whereas for $p=1$, it is called geometric median.
It is uniquely defined, whenever the points are not collinear, see \cite{weiszfeld15BS}.
Otherwise, in case of ambiguity, the set of minimizers is a one-dimensional line segment, of which we choose the midpoint.
However, unlike in the case $p=2$, there is no explicit formula or exact algorithm involving only arithmetic operations and $k$-th roots to compute $\mean$, see \cite{nomedianalg88B}.
Nevertheless, the geometric median can be approximated using Weiszfeld's algorithm, which consists mainly in the fixed point iteration
\[
m^{(k+1)} = \Big( \sumitoN \frac{\lambda_i}{\Vert x_i-m^{(k)}\Vert} \Big)^{-1}\Big( \sumitoN \frac{\lambda_i x_i}{\Vert x_i-m^{(k)}\Vert} \Big),
\]
with a particular choice of the starting point $m^{(0)}$ that guarantees $m^{(k)} \neq x_i$ for all $i=1, \dots, N$ and $k\geq 0$.
This method is a gradient descent method and accelerated methods are also available.
For more details, we refer to the survey \cite{weiszfeld15BS}.

Next, we comment on the relation of $\mmean$ to Wasserstein barycenters.
In the most important case $p=2$, formula \eqref{eq:mmean} simplifies when the transport plans are non-mass-splitting, that is, for every $i=1,\dots, N$, each $\pi^i$ is supported on the graph of some transport maps $T^i\colon \supp(\nu)\to \supp(\mu^i)$ with $T^i_\# \nu = \mu^i$.
In that case, $\mmean$ pushes $\nu$ forward by the average of the transport maps,
\begin{equation}
\mmean
= \Big( \sumitoNlambdai T^i \Big)_\#.
\end{equation}
This is called McCann interpolation for $N=2$.
In the nondiscrete setting, if $\nu$ is absolutely continuous, 
then optimal transport maps $T^i$ exist by Brenier's theorem, 
see e.g.~\cite[Thm.~1.22]{S15otapplied}.
In fact, \cite{ABC16fixedpoint} discusses the following fixed-point iteration for approximate barycenter computation:
\begin{enumerate}
	\item Compute the optimal transport maps $T^i$ from $\nu$ to $\mu^i$, $i=1, \dots, N$
	\item Update $\nu \gets \Big( \sumitoNlambdai T^i \Big)_\# \nu$, repeat.
\end{enumerate}
It is shown that if there is a unique fixed point, 
then this is the optimal barycenter and the iteration converges, which is the case for, e.g., Gaussian measures.
The convergence is numerically observed to be very fast, and in certain special cases, it is reached already in one iteration.
Taking the geometric structure of the Wasserstein space into account, see, e.g., \cite{gradflows08AGS}, the fixed-point procedure above is the the typical algorithm for computing Fr\'echet means on manifolds \cite{frechettemplates05T,frechetpersistence14,frechet19procrustes}. 

This motivates the algorithms presented in this paper, which consist in deliberately performing only the first iteration of the fixed-point procedure above.
More precisely, the approximate barycenters are of the form $\tilde\nu=\mmean(\nu)$ for certain plans $\pi^i$ and initial measures $\nu$.
We found that this yields the best tradeoff between speed and accuracy in practice, since the error improvement of further iterations is typically rather small.

We illustrate this claim by the following numerical example.
We create $N=10$ discrete measures $\mu^i=\sum_{l=1}^n \frac 1 n \delta(x_l^i)$, $i=1, \dots, N$, with $n=50$ points each, which we sample uniformly from the unit disk and center to have mean zero.
We initialize with $\nu^{(0)}\coloneqq \mu^1$ and perform the iteration above until convergence after $5$ iterations, that is, $\nu^{(6)}=\nu^{(5)}$.
Optimal transport maps $T^i$ always exist here, since we have empirical measures with the same number of atoms.
In Figure~\ref{fig:multiple_iterations}, we show the cost $\Psi_2(\nu^{(k)})$ with respect to $k$ and compare to the cost $\Psi_2(\hat\nu)$ of an optimal barycenter $\hat\nu$, that is, a solution of \eqref{eq:bary}.
While the error $\Psi_2(\nu^{(k)})-\Psi_2(\hat\nu)$ is decreased in the first step by $83.2\%$, the improvement in the second iteration is only $37\%$ of the remaining error and decreases even further until convergence to a suboptimal solution.
Moreover, the absolute cost decrease $\Psi_2(\nu^{(2)})-\Psi_2(\nu^{(1)})$ in the second iteration is only $7.5\%$ of the decrease $\Psi_2(\nu^{(1)})-\Psi_2(\nu^{(0)})$ of the first iteration.
This also makes sense intuitively, since it seems reasonable that the largest improvement is gained by pushing every support point from some rather arbitrary initialization to the barycenter of several other reasonably chosen support points of the $\mu^i$.
\renewcommand\curwidth{.7\textwidth}
\begin{figure}[htb]
	\centering
	\includegraphics[width=\curwidth]{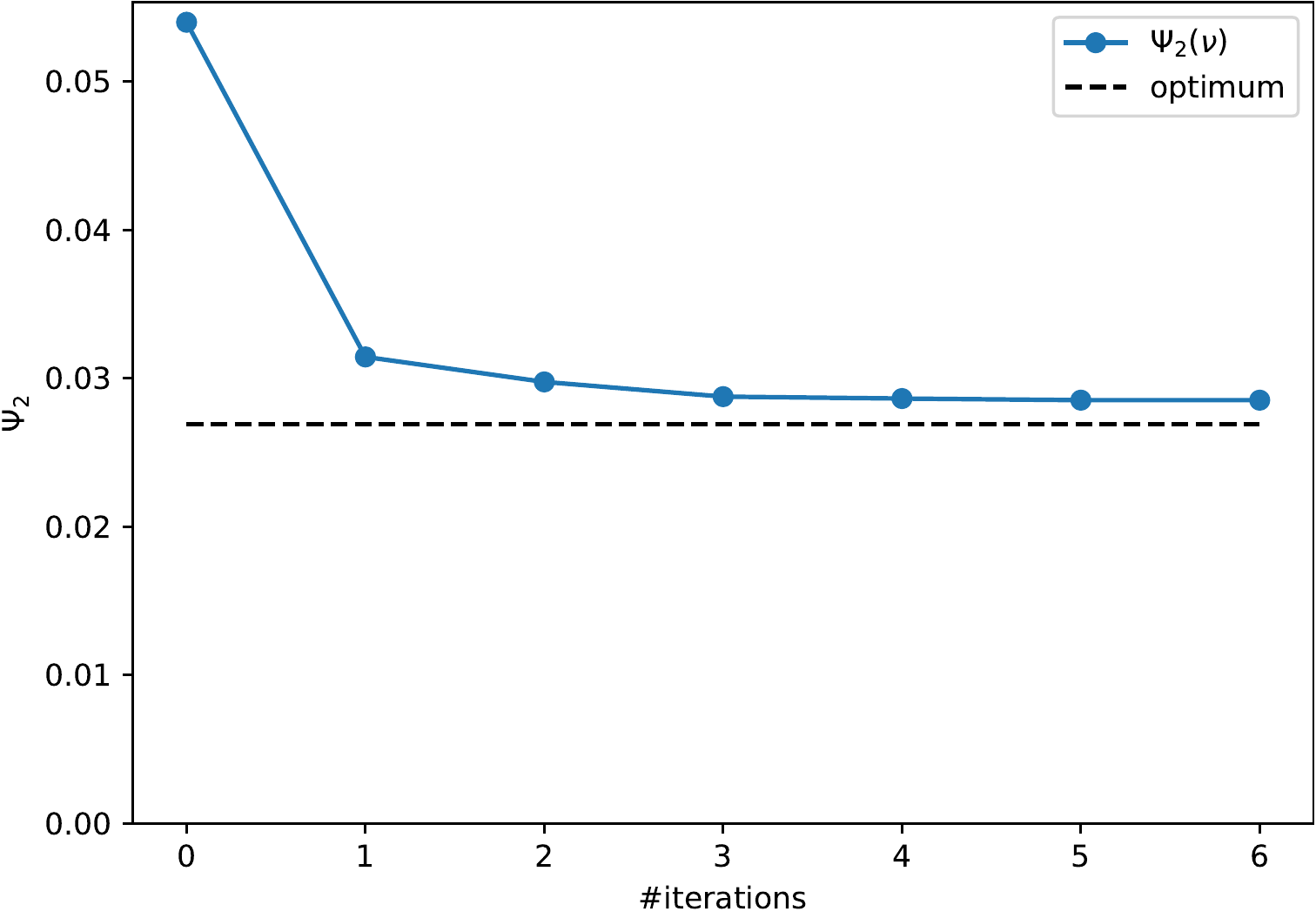}
	\caption{Barycenter cost $\Psi_2(\nu^{(k)})$ over the number of iterations $k$ in blue.
	The black dashed line depicts the optimal cost $\Psi_2(\hat\nu)$.}
	\label{fig:multiple_iterations}
\end{figure}

Furthermore, from a theoretical standpoint, there are simple examples with convergence after one iteration for both presented algorithms below, such that we cannot expect in general to gain any improvements using more than one iteration either.
In particular, as in the numerical example above, there is no way to guarantee convergence to the optimum of this iterative procedure in general, which is the case for any algorithm due to the NP-hardness of the problem \cite{AB21nphard}.

%---------------------------------------------------------------------------
\subsection{Reference Algorithm}\label{sec:ref_alg}
%---------------------------------------------------------------------------
In this section, we choose $\nu=\mu^j$ as initialization.
For simplicity of notation, reorder the measures such that $j=1$.
That is, we compute $N-1$ optimal transport plans
\begin{equation}\label{eq:pii_ref_def}
\pi^i = \sum_{k=1}^{n_1} \sum_{l=1}^{n_i} \pi^i_{k, l} \delta( x^1_k, x_l^i) \in \argmin_{\pi\in\Pi(\mu^1, \mu^i)} \langle c_p, \pi \rangle, \quad i=2, \dots, N
\end{equation}
and consider the approximate barycenter defined by
\begin{equation}\label{eq:ref_bary}
\tilde\nu
%= \sum_{k=1}^{n_1} \mu^1_k \delta\Big(\sumitoNlambdai T^i(x^1_k) \Big)
= \sum_{k=1}^{n_1} \mu^1_k \delta(\mean (x^1_k)).
\end{equation}
Note that the support of $\tilde\nu$ given by \eqref{eq:ref_bary} is very sparse, since it contains only $n_1$ elements, which is an interesting feature from a computational point of view.

For $p=2$, if the input measures are given in terms of matrices $X^i\in \RR^{n_i\times d}$, where the rows are the support points, and the corresponding mass weights are the vectors $\mu^i\in \RR^{n_i}$ for all $i=1, \dots, N$, then computing the support matrix $Y\in \RR^{n_1\times d}$ of \eqref{eq:ref_bary} can be written as an average of $N$ matrix products as outlined in Algorithm~\ref{alg:ref}.

\begin{algorithm}[htb]
	\begin{algorithmic}
		\State \textbf{Input:} Support points $X^i\in \RR^{n_i\times d}$, masses $\mu^i\in \RR^{n_i}$, $i=1, \dots, N$, weights $0<\lambda\in \RR^N$
		\State $\pi^1\coloneqq \diag(\mu^1)$
		\For{$i=2,\dots, N$}
		\State Compute 
		$\pi^i \in \argmin_{\pi\in\Pi(\mu^1, \mu^i)} \langle c, \pi \rangle \in \RR^{n_1\times n_i}$
		\EndFor
		\State $Y\coloneqq \diag(\mu^1)^{-1} \sumitoNlambdai  \pi^i\cdot X^i$
		\State \textbf{Output:} support $Y\in \RR^{n_1\times d}$, masses $\mu^1\in \RR^{n_1}$
		\caption{Reference algorithm, $p=2$}
		\label{alg:ref}
	\end{algorithmic}
\end{algorithm}
In the case $p=1$, since there is no closed form for $\mean(x_k^1)$, we have to make slight modifications to the algorithm in that case.
\begin{remark}\label{rem:weiszfeld_accuracy}
	Let $f_{x, \lambda}(m) \coloneqq \sumitoNlambdai \Vert x_i - m\Vert$ and $\hat m = \argmin_{m\in \RR^d} f(m)$.
	We show that Weiszfeld's algorithm is guaranteed to approximate $f(\hat m)$ up to a multiplicative factor of $(1+\varepsilon)$ for a certain minimal number of iterations that is explicitly computable.
	In \cite[Thm.~8.2]{weiszfeld15BS} it is shown for the Weiszfeld iterates $m^{(k)}$ that
	\begin{equation}
		f(m^{(k)})-f(\hat m) \leq \frac M k\Vert m^{(0)}-\hat m\Vert^2,
	\end{equation}
	with an explicit formula for $M$, depending only on the $x_i$ and $m^{(0)}$.
	Since $f$ is convex, from $\nabla f = 0$, a simple calculation shows that $\hat m$ must lie in the convex hull of the $x_i$.
	Thus
	\begin{equation}
	\Vert m^{(0)}-\hat m\Vert^2 \leq \max_{i=1, \dots, N} \Vert m^{(0)} - x_i\Vert^2.
	\end{equation}
	Moreover, by \eqref{eq:W1_estimate}, it holds
	\begin{equation}
	\sum_{i<j}^N \lambda_i\lambda_j\Vert x_i-x_j\Vert
	\leq \sumitoNlambdai \Vert x_i - \hat m\Vert
	= f(\hat m),
	\end{equation}
	such that for any given $\varepsilon > 0$, choosing
	\begin{equation}
	k\geq \frac{M\cdot \max_{i=1, \dots, N} \Vert m^{(0)} - x_i\Vert^2}{\varepsilon \sum_{i<j}^N \lambda_i\lambda_j\Vert x_i-x_j\Vert}
	\end{equation}
	guarantees that
	\begin{align}
	f(m^{(k)})
	&= (f(m^{(k)}) - f(\hat m)) + f(\hat m)
	\leq \frac M k \Vert m^{(0)}-\hat m\Vert^2 + f(\hat m) \\
	&\leq \frac{M\varepsilon \sum_{i<j}^N \lambda_i\lambda_j\Vert x_i-x_j\Vert}{M\cdot \max_{i=1, \dots, N} \Vert m^{(0)} - x_i\Vert^2}\Vert m^{(0)}-\hat m\Vert^2 + f(\hat m)
	\leq (1+\varepsilon)f(\hat m).
	\end{align}
\end{remark}
This prepares us to state the reference algorithm for the case $p=1$, see Algorithm~\ref{alg:ref1}.
\begin{algorithm}[htb]
	\begin{algorithmic}
		\State \textbf{Input:} Measures $\mu^i=\sum_{l=1}^{n_i}\mu^i_l\delta(x^i_l)$, $i=1, \dots, N$, weights $\lambda\in \Delta_N$, Weiszfeld accuracy $\varepsilon$
		\State $\pi^1\coloneqq \diag(\mu^1)$
		\For{$i=2,\dots, N$}
		\State Compute 
		$\pi^i \in \argmin_{\pi\in\Pi(\mu^1, \mu^i)} \langle c, \pi \rangle$
		\EndFor
		\For{$k=1, \dots, n_1$}
		\State $m_k\coloneqq \argmin_{m\in \RR^d} \sumitoNlambdai \sum_{l=1}^{n_i} \frac{\pi^i_{k, l}}{\mu^1_k}\Vert m -  x^i_l\Vert$ up to factor $(1+\varepsilon)$, see Remark~\ref{rem:weiszfeld_accuracy}
		\EndFor
		\State \textbf{Output:} approximate barycenter $\tilde\nu\coloneqq \sum_{k=1}^{n_1}\mu^1_k \delta(m_k)$
		\caption{Reference algorithm, $p=1$}
		\label{alg:ref1}
	\end{algorithmic}
\end{algorithm}

\subsection{Pairwise Algorithm}\label{sec:pairwise_alg}

We will see that in order to achieve better results than the reference algorithm, it is beneficial to ``average out'' the asymmetry introduced by choosing $\mu^1$ as the reference measure in \eqref{eq:ref_bary}.
Therefore, we choose
\begin{equation}\label{eq:pairwise_init_choice}
\nu
%= \sum_{k=1}^{n_\nu} \nu_k\delta(y_k)
= \sumitoNlambdai \mu^i
\end{equation}
as initial measure in this section.
However, instead of computing optimal plans from $\nu$ to each $\mu^i$, we solve
\[
\pi^{ij}\in \argmin_{\pi\in\Pi(\mu^i, \mu^j)} \langle c, \pi \rangle
\]
pairwise for every $1\leq i<j\leq N$ and use the transport plans
\begin{equation}\label{eq:pii_pairwise_def}
\pi^i = \sum_{j=1}^N\lambda_j \pi^{ji} \in \Pi(\nu, \mu^i),
\end{equation}
in \eqref{eq:mmean}, so that our approximate barycenter $\tilde\nu$ with \eqref{eq:pairwise_init_choice} and \eqref{eq:pii_pairwise_def} reads as
\begin{equation}\label{eq:pairwise_nu}
\tilde\nu
= \mmean(\nu)
= \sumitoNlambdai \sum_{k=1}^{n_i} \mu^i_k \delta( M_{\lambda, (\pi^{i1}, \dots, \pi^{iN})}(x^i_k)).
\end{equation}
Splitting up the OT computations like this scales better in terms of computational complexity and seems to yield better numerical results in practice.
Clearly, we will have
\[
\#\supp(\tilde\nu) \leq n_1+\dots+n_N,
\]
that is, $\tilde\nu$ meets practically the same sparsity bound as an optimal solution $\hat\nu$, see \eqref{eq:sparsity}.

\begin{remark}
	Note that the inner sum in \eqref{eq:pairwise_nu} is of the form \eqref{eq:ref_bary}.
	If we denote by $\tilde\nu^i$ the barycenter obtained from the reference algorithm, when $\mu^i$ was the reference measure, i.e., permuted to the first position, our approximation \eqref{eq:pairwise_nu} is simply
	\begin{equation}\label{eq:average_of_alg1}
	\tilde\nu = \sumitoNlambdai \tilde\nu^i.
	\end{equation}
	However, since we can choose $\pi^{ji}=(\pi^{ij})^\tT$, we save half of the necessary OT computations compared to executing the reference algorithm $N$ times.
\end{remark}

Algorithm~\ref{alg:pairwise} summarizes this approach for $p=2$ using matrix-vector notation, where $\odot$ denotes element-wise multiplication and $\mathds{1}_d$ denotes a $d$-dimensional vector of ones.
Note that $\eta$ denotes an upper bound of the relative error for the particular given problem, i.e., it holds that $\Psi_2(\tilde\nu)/\Psi_2(\hat\nu)\leq \eta$.
This is proven in Section~\ref{sec:analysis}.

\begin{algorithm}[htb]
	\begin{algorithmic}
		\State \textbf{Input:} Support points $X^i\in \RR^{n_i\times d}$, masses $\mu^i\in \RR^{n_i}$, $i=1, \dots, N$, weights $0<\lambda\in \RR^N$
		\State Set $M=n_1 + \dots + n_N$
		\For{$i=1, \dots, N$}
		\State $\pi^{ii}\coloneqq \diag(\mu^i)$
		\For{$j=i+1,\dots, N$}
		\State Compute 
		$\pi^{ij} \in \argmin_{\pi\in\Pi(\mu^i, \mu^j)} \langle c, \pi \rangle \in \RR^{n_i\times n_j}$
		\State $\pi^{ji}\coloneqq (\pi^{ij})^\tT$
		\EndFor
		\EndFor		
		\bigskip
		\begin{flalign*}
		P&\coloneqq 
		\begin{bmatrix}
		\pi^{11} & \dots & \pi^{1N} \\
		\vdots & \ddots & \vdots \\
		\pi^{N1} & \dots & \pi^{NN}
		\end{bmatrix}
		\in \RR^{M\times M} &&\\
		X&\coloneqq \begin{bmatrix} X^1 \\ \vdots \\ X^N \end{bmatrix} \in \RR^{M\times d} \\
		\Lambda &\coloneqq \begin{bmatrix} \underbrace{\lambda_1, \dots, \lambda_1}_{n_1 \text{ times}}, \dots, \underbrace{\lambda_N, \dots, \lambda_N}_{n_N \text{ times}} \end{bmatrix}^\tT\in \RR^M \\
		\mu &\coloneqq \begin{bmatrix}\mu^1_1 & \dots & \mu^1_{n_1} & \dots & \mu^N_1 & \dots & \mu^N_{n_N}\end{bmatrix}^\tT\in \RR^{M} \\
		Y&\coloneqq \diag(\mu)^{-1} \cdot P \cdot \diag(\Lambda) \cdot X \in\RR^{M\times d} \\
		\nu &\coloneqq \Lambda\odot \mu \in \RR^M\\
		\eta&\coloneqq 2-\nu^\tT\cdot((Y-X)\odot(Y-X))\cdot \mathds{1}_d / \sum_{i<j}^N \lambda_i\lambda_j \langle c_2, \pi^{ij} \rangle \in \RR
		\end{flalign*}
		\State \textbf{Output:} support $Y\in \RR^{M\times d}$, masses $\nu\in \RR^M$, error bound $\eta\in [1, 2]$
		\caption{Pairwise algorithm, $p=2$}
		\label{alg:pairwise}
	\end{algorithmic}
\end{algorithm}

Again, these matrix-vector computations will not work in the case $p=1$.
Instead, Algorithm~\ref{alg:pairwise1} outlines the computation of \eqref{eq:pairwise_nu} using Weiszfeld's algorithm.

\begin{algorithm}[htb]
	\begin{algorithmic}
		\State \textbf{Input:} Measures $\mu^i=\sum_{k=1}^{n_i}\mu^i_k\delta(x^i_k)$, $i=1, \dots, N$, weights $\lambda\in \Delta_N$, Weiszfeld accuracy $\varepsilon$
		\For{$i=1, \dots, N$}
		\State $\pi^{ii}\coloneqq \diag(\mu^i)$
		\For{$j=i+1,\dots, N$}
		\State Compute 
		$\pi^{ij} \in \argmin_{\pi\in\Pi(\mu^i, \mu^j)} \langle c_1, \pi \rangle \in \RR^{n_i\times n_j}$
		\State $\pi^{ji}\coloneqq (\pi^{ij})^\tT$
		\EndFor
		\EndFor	
		\For{$i=1, \dots, N$}
			\For{$k=1, \dots, n_i$}
				\State $m^i_k\coloneqq \argmin_{m\in \RR^d} \sum_{j=1}^N \lambda_j \sum_{l=1}^{n_j} \frac{\pi^{ij}_{k, l}}{\mu^i_k}\Vert m - x^j_l\Vert$ up to factor $(1+\varepsilon)$, see Remark~\ref{rem:weiszfeld_accuracy}
			\EndFor
		\EndFor
		\State $\eta\coloneqq \sumitoNlambdai\sum_{j=1}^N\lambda_j\sum_{k=1}^{n_\nu}\sum_{l=1}^{n_i} \pi^{ij}_{k, l} \Vert m_k^i- x_l^j\Vert / \sum_{i<j}^N \lambda_i\lambda_j \langle c_1, \pi^{ij}\rangle$
		\State \textbf{Output:} approximate barycenter $\tilde\nu\coloneqq \sumitoNlambdai\sum_{k=1}^{n_i}\mu^i_k \delta(m^i_k)$, error bound $\eta\in [1,2]$
		\caption{Pairwise algorithm, $p=1$}
		\label{alg:pairwise1}
	\end{algorithmic}
\end{algorithm}

%---------------------------------------------------------------------------
\section{Analysis}\label{sec:analysis}
%---------------------------------------------------------------------------

In this section, we give worst case bounds for the relative error $\Psi_p(\tilde\nu)/\Psi_p(\hat\nu)$, where $\tilde\nu$ is an approximate barycenter computed by one of the algorithms above, $\hat\nu$ is an optimal barycenter, and $\Psi_p$ is the objective defined in \eqref{eq:bary}.
In the proofs, we will use the following basic identities.

\begin{lemma}\label{lem:basic}
	For any points $x_1, \dots, x_N, y\in \RR^d$, $\lambda\in \Delta_N$ and $m\coloneqq \sumitoNlambdai x_i$, we have the following identities:
	\begin{align}
	\sum_{i=1}^N \lambda_i \Vert x_i - y\Vert^2
	&= \Vert m-y\Vert^2 + \sum_{i=1}^N \lambda_i \Vert x_i - m\Vert^2,
	\label{eq:sunflower} \\
	\sumitoNlambdai \Vert x_i-m\Vert^2
	&= \sum_{i<j}^N \lambda_i\lambda_j \Vert x_i - x_j\Vert^2, \label{eq:star_vs_knn} \\
	\sumitoNlambdai \Vert x_i-y\Vert
	&\geq \sum_{i<j}^N \lambda_i\lambda_j \Vert x_i - x_j\Vert. \label{eq:W1_estimate}
	\end{align}
\end{lemma}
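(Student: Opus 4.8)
The plan is to treat the three identities in turn, deriving the second from the first, and proving the third separately via the triangle inequality.

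For \eqref{eq:sunflower}, I would write $x_i - y = (x_i - m) + (m-y)$ and expand the squared norm, obtaining $\sumitoN \lambda_i \Vert x_i-y\Vert^2 = \sumitoN \lambda_i \Vert x_i-m\Vert^2 + 2\langle \sumitoN\lambda_i(x_i-m), m-y\rangle + \Vert m-y\Vert^2$. The cross term vanishes because $\sumitoN \lambda_i(x_i-m) = m - m = 0$, using $\sumitoN\lambda_i = 1$ together with the definition $m=\sumitoNlambdai x_i$. This is just the bias--variance decomposition and is immediate.

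For \eqref{eq:star_vs_knn}, the slickest route is to reuse \eqref{eq:sunflower}: apply it with $y=x_j$, multiply by $\lambda_j$, and sum over $j$. On the left this yields $\sum_j\lambda_j\sum_i\lambda_i\Vert x_i-x_j\Vert^2$, whose diagonal terms $i=j$ vanish, leaving $2\sum_{i<j}\lambda_i\lambda_j\Vert x_i-x_j\Vert^2$. On the right, since $\sum_j\lambda_j=1$, the term $\sum_j\lambda_j\Vert m-x_j\Vert^2$ reproduces $\sumitoN\lambda_i\Vert x_i-m\Vert^2$, and the second summand contributes another copy of it, giving $2\sumitoN\lambda_i\Vert x_i-m\Vert^2$ in total; dividing by $2$ gives the claim. (Alternatively, one expands both sides directly and checks that each equals $\sumitoN\lambda_i\Vert x_i\Vert^2 - \Vert m\Vert^2$.)

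For \eqref{eq:W1_estimate}, which is an inequality valid for \emph{any} $y$ and does not involve $m$, I would start from the triangle inequality $\Vert x_i-x_j\Vert \leq \Vert x_i-y\Vert + \Vert x_j-y\Vert$ and sum it against $\lambda_i\lambda_j$ over $i<j$. Reindexing shows that $\sum_{i<j}\lambda_i\lambda_j(\Vert x_i-y\Vert + \Vert x_j-y\Vert)$ collapses to $\sumitoN\lambda_i(1-\lambda_i)\Vert x_i-y\Vert$, since the coefficient of each $\Vert x_k-y\Vert$ is $\lambda_k\sum_{j\neq k}\lambda_j = \lambda_k(1-\lambda_k)$. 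Bounding $1-\lambda_i\leq 1$ then yields the upper bound $\sumitoNlambdai\Vert x_i-y\Vert$, as desired. The main point to watch here is the bookkeeping of the double sums: correctly tracking the factor of $2$ between $\sum_{i<j}$ and the full sum $\sum_{i,j}$, and verifying that the off-diagonal reindexing produces exactly the coefficient $\lambda_k(1-\lambda_k)$. None of the three steps is deep, so this careful handling of the off-diagonal terms is the only mild obstacle.
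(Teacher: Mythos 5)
Your proposal is correct and follows essentially the same route as the paper for all three parts: the bias--variance expansion with vanishing cross term for \eqref{eq:sunflower}, plugging $y=x_j$ into \eqref{eq:sunflower} and summing against $\lambda_j$ for \eqref{eq:star_vs_knn}, and the triangle inequality with double-sum bookkeeping for \eqref{eq:W1_estimate}. The only cosmetic difference is in the last part, where you sum directly over $i<j$ and obtain the coefficient $\lambda_k(1-\lambda_k)$ (a marginally sharper intermediate bound) before relaxing to $\lambda_k$, whereas the paper symmetrizes to the full double sum and divides by two; both computations are valid.
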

%----------------------------------------------------------------------
\begin{proof}
	For \eqref{eq:sunflower}, we set $z\coloneqq m-y$ to obtain
	\begin{align}
	\sum_{i=1}^N \lambda_i \Vert x_i - y\Vert^2
	&= \sum_{i=1}^N \lambda_i \Vert x_i -m+z\Vert_2^2
	= \sum_{i=1}^N \lambda_i \left(\Vert z\Vert^2 + \Vert x_i-m\Vert^2 - 2\langle x_i-m, z\rangle \right) \\
	&=  \Vert m-y\Vert^2 + \sum_{i=1}^N \lambda_i \Vert x_i-m\Vert^2.
	\end{align}
	For \eqref{eq:star_vs_knn}, plugging $y=x_j$ into \eqref{eq:sunflower}, we get
	\begin{equation}
		\sumitoNlambdai \Vert x_i-x_j\Vert^2
		= \Vert m-x_j\Vert^2 + \sumitoNlambdai \Vert x_i-m\Vert^2.
	\end{equation}
	Weighting this equality with $\lambda_j$ and summing over $j=1, \dots, N$, we get
	\begin{align}
		\sum_{i,j=1}^N\lambda_i \lambda_j \Vert x_i-x_j\Vert^2
		&= \sum_{j=1}^N \lambda_j\Vert x_j-m\Vert^2 + \sum_{j=1}^N\lambda_j \sumitoNlambdai \Vert x_i-m\Vert^2 \\
		&= 2\sumitoNlambdai \Vert x_i-m\Vert^2.
	\end{align}
	Dividing by $2$ yields \eqref{eq:star_vs_knn}.
	For \eqref{eq:W1_estimate}, note that by the triangle inequality,
	\begin{align}
	\sum_{i<j}^N \lambda_i\lambda_j \Vert x_i - x_j\Vert 
	&= \frac 1 2 \sumitoNlambdai \sum_{j=1}^N\lambda_j \Vert x_i - x_j\Vert \\
	&\leq \frac 1 2 \sumitoNlambdai \sum_{j=1}^N\lambda_j (\Vert x_i-y\Vert + \Vert y- x_j\Vert) \nonumber\\
	&= \frac 1 2 \Big(\sumitoNlambdai \Vert x_i-y\Vert + \sum_{j=1}^N\lambda_j \Vert y- x_j\Vert\Big)
	= \sumitoNlambdai \Vert x_i-y\Vert.\qedhere
	\end{align}
\end{proof}
In order to upper bound $\Psi_p(\tilde\nu)/\Psi_p(\hat\nu)$, we require a lower bound on $\Psi_p(\hat\nu)$.
\begin{proposition}\label{prop:lower_bound_nuhat}
	For any discrete $\nu\in \mathcal P(\RR^d)$ and $p=1, 2$, it holds that
	\begin{equation}\label{eq:lower_bound_nuhat}
	\Psi_p(\nu)
	\geq \sum_{i<j}^N \lambda_i\lambda_j \W_p^p(\mu^i, \mu^j).
	\end{equation}
\end{proposition}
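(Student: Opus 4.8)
The plan is to relate the objective $\Psi_p(\nu) = \sumitoNlambdai \W_p^p(\nu, \mu^i)$ to the pairwise costs on the right-hand side by \emph{gluing} optimal transport plans from $\nu$ to the inputs through the common reference measure $\nu$. Fix optimal plans $\pi^i \in \Pi(\nu, \mu^i)$ attaining $\W_p^p(\nu, \mu^i)$, and write $\nu = \sum_k \nu_k \delta(y_k)$ together with $\pi^i = \sum_{k,l} \pi^i_{k,l}\,\delta(y_k, x^i_l)$. Since all measures are discrete, I would form the multi-coupling
\begin{equation}
\sigma_{k, l_1, \dots, l_N} \coloneqq \nu_k \prod_{i=1}^N \frac{\pi^i_{k, l_i}}{\nu_k},
\end{equation}
which disintegrates each $\pi^i$ over $\nu$ and recombines the conditionals independently. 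A short computation using $\sum_l \pi^i_{k,l} = \nu_k$ shows that the bivariate marginal of $\sigma$ in coordinates $(i,j)$ is a valid coupling $\gamma^{ij} \in \Pi(\mu^i, \mu^j)$, and that its marginal in coordinates $(\nu, i)$ is exactly $\pi^i$.

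The second step is a pointwise inequality supplied by Lemma~\ref{lem:basic}. For each atom $(y_k, x^1_{l_1}, \dots, x^N_{l_N})$ in the support of $\sigma$, I would apply the estimates with reference point $y = y_k$: for $p=2$, combining \eqref{eq:sunflower} and \eqref{eq:star_vs_knn} gives
\begin{equation}
\sum_{i<j}^N \lambda_i\lambda_j \Vert x^i_{l_i} - x^j_{l_j}\Vert^2 \leq \sumitoNlambdai \Vert x^i_{l_i} - y_k\Vert^2,
\end{equation}
while for $p=1$ the bound \eqref{eq:W1_estimate} yields the same inequality with first powers. Since $\W_p^p(\mu^i, \mu^j) \leq \langle c_p, \gamma^{ij}\rangle$ because $\gamma^{ij}$ is an admissible but possibly suboptimal coupling, summing these pointwise inequalities against $\sigma$ and invoking the two marginal identities from the first step yields
\begin{equation}
\sum_{i<j}^N \lambda_i\lambda_j \W_p^p(\mu^i, \mu^j) \leq \sumitoNlambdai \langle c_p, \pi^i\rangle = \sumitoNlambdai \W_p^p(\nu, \mu^i) = \Psi_p(\nu),
\end{equation}
which is the claim.

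The main obstacle is organizing the gluing so that $\sigma$ simultaneously produces the pairwise couplings $\gamma^{ij}$ (giving the left-hand side after the pointwise estimate) and recovers the individual plans $\pi^i$ as marginals (giving the right-hand side $\Psi_p(\nu)$); once the marginalization identities are verified, the remainder is a direct application of the already-proven Lemma~\ref{lem:basic} with the shared source atom $y_k$ in the role of the reference point. The only mild care needed is that $\gamma^{ij}$ need not be $W_p$-optimal, so one must use the inequality $\W_p^p(\mu^i,\mu^j)\leq\langle c_p,\gamma^{ij}\rangle$ rather than equality, and that the argument treats $p=1$ and $p=2$ uniformly, which is precisely why Lemma~\ref{lem:basic} collects both the quadratic identities and the first-order triangle inequality in one place.
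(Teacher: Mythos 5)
Your proposal is correct and follows essentially the same route as the paper: both glue the optimal plans $\pi^i\in\Pi(\nu,\mu^i)$ through the common source $\nu$ into the product multi-coupling with weights $\prod_i\pi^i_{k,l_i}/\nu_k^{N-1}$, apply the pointwise estimates \eqref{eq:star_vs_knn} and \eqref{eq:W1_estimate} from Lemma~\ref{lem:basic} at each atom $y_k$, and then marginalize to obtain admissible (possibly suboptimal) pairwise couplings $\gamma^{ij}\in\Pi(\mu^i,\mu^j)$. Your phrasing in terms of the explicit multi-coupling $\sigma$ and its marginals is just a slightly more structured presentation of the identical computation.
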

\begin{proof}
	Let $p\in \{1, 2\}$ and $\nu=\sum_{k=1}^{n_\nu} \nu_k \delta(y_k)$ be arbitrary.
	Take $\pi^i \in \argmin_{\pi\in\Pi(\nu, \mu^i)} \langle c_p, \pi \rangle$, then by definition,
	\begin{equation}
	\Psi_p(\nu)
	= \sumitoNlambdai \W_p^p(\nu, \mu^i)
	= \sumitoNlambdai \sum_{k=1}^{n_\nu}\sum_{l_i=1}^{n_i} \pi^i_{k, l_i} \Vert y_k-x^i_{l_i}\Vert^p.
	\end{equation}
	Since it holds for any $i=1, \dots, N$ and $k=1, \dots, n_\nu$ that
	\begin{equation}
	\sum_{l_1, \dots, l_{i-1}, l_{i+1}, \dots, l_N} \frac{\pi^1_{k, l_1}\dots \pi^{i-1}_{k, l_{i-1}}\pi^{i+1}_{k, l_{i+1}}\dots \pi^N_{k, l_N}}{\nu^{N-1}_k}
	= 1,
	\end{equation}
	we get
	\begin{align}
	\Psi_p(\nu)
	&= \sumitoNlambdai\sum_{k=1}^{n_\nu} \sum_{l_1, \dots, l_N} \frac{\pi^1_{k, l_1}\dots \pi^N_{k, l_N}}{\nu^{N-1}_k} \Vert y_k- x^i_{l_i}\Vert^p \\
	&= \sum_{k=1}^{n_\nu} \sum_{l_1, \dots, l_N} \!\!\!\frac{\pi^1_{k, l_1}\dots \pi^N_{k, l_N}}{\nu^{N-1}_k} \sumitoNlambdai\Vert y_k- x^i_{l_i}\Vert^p
	\end{align}
	and by \eqref{eq:star_vs_knn} and \eqref{eq:W1_estimate}, this yields
	\begin{align}
	\Psi_p(\nu)
	&\geq \sum_{k=1}^{n_\nu} \sum_{l_1, \dots, l_N} \!\!\!\!\frac{\pi^1_{k, l_1}\dots \pi^N_{k, l_N}}{\nu^{N-1}_k} \sum_{i<j}^N \lambda_i\lambda_j \Vert x^i_{l_i} - x^j_{l_j}\Vert^p \\
	&= \sum_{i<j}^N \lambda_i\lambda_j \sum_{k=1}^{n_\nu}\sum_{l_i, l_j} \frac{\pi^i_{k, l_i}\pi^j_{k, l_j}}{\nu_k} \Vert x^i_{l_i} - x^j_{l_j}\Vert^p.
	\end{align}
	It is straightforward to check that
	\begin{equation}
	\sum_{k=1}^{n_\nu}\sum_{l_i, l_j} \frac{\pi^i_{k, l_i}\pi^j_{k, l_j}}{\nu_k} \delta(x^i_{l_i}, x^j_{l_j}) \in \Pi(\mu^i, \mu^j),
	\end{equation}
	and so we get
	\[
	\Psi_p(\nu) \geq \sum_{i<j}^N \lambda_i\lambda_j \W_p^p(\mu^i, \mu^j).\qedhere
	\]
\end{proof}

Equipped with \eqref{eq:lower_bound_nuhat}, we can see that already the simple choices $\nu=\mu^j$ and $\nu=\sumitoNlambdai\mu^i$ for the initial measure approximate the optimal barycenter to some extent.

\begin{proposition}\label{prop:init_bounds}
	Let $p\in \{1, 2\}$ and $\hat\nu$ be an optimal barycenter in \eqref{eq:bary}.
	\begin{enumerate}[(i)]
		\item For $\nu\coloneqq \mu^j$, it holds that
		\begin{equation}
		\frac{\Psi_p(\nu)}{\Psi_p(\hat\nu)} \leq \frac{1}{\lambda_j}.
		\end{equation}
		Note that in particular, if $j\in \argmax_{i=1}^N \lambda_i$, then $\Psi_p(\nu)/\Psi_p(\hat\nu)\leq N$.
		\item\label{prop:init_bound_mixture} Let $\nu\coloneqq \sumitoNlambdai \mu^i$, then
		\begin{equation}
			\frac{\Psi_p(\nu)}{\Psi_p(\hat\nu)} \leq 2.
		\end{equation}
		\item If $\nu$ is chosen randomly as one of the $\mu^i$ with probabilities $\lambda_i$, then also
		\begin{equation}
			\frac{\E[\Psi_p(\nu)]}{\Psi_p(\hat\nu)} \leq 2.
		\end{equation}
	\end{enumerate}
\end{proposition}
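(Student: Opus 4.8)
The plan is to benchmark all three ratios against the single quantity $S \coloneqq \sum_{i<j}^N \lambda_i\lambda_j \W_p^p(\mu^i, \mu^j)$, which by Proposition~\ref{prop:lower_bound_nuhat} applied to the optimal barycenter is a lower bound for every denominator: $\Psi_p(\hat\nu) \geq S$ by \eqref{eq:lower_bound_nuhat}. It then suffices to produce, for each choice of $\nu$, a matching upper bound (or exact value) for the numerator $\Psi_p(\nu)$ in terms of $S$.

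For part (i) with $\nu = \mu^j$, I would first expand $\Psi_p(\mu^j) = \sumitoNlambdai \W_p^p(\mu^j, \mu^i)$, noting that the term $i=j$ vanishes since $\W_p^p(\mu^j,\mu^j)=0$. The key observation is that $\lambda_j \Psi_p(\mu^j) = \sum_{k \neq j} \lambda_j \lambda_k \W_p^p(\mu^j, \mu^k)$ is exactly the subcollection of summands of $S$ indexed by the unordered pairs that contain $j$ (each such pair $\{j,k\}$ contributes once). Since all remaining summands of $S$ are non-negative, this gives $\lambda_j \Psi_p(\mu^j) \leq S \leq \Psi_p(\hat\nu)$, which is precisely $\Psi_p(\mu^j)/\Psi_p(\hat\nu) \leq 1/\lambda_j$. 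The remark then follows because the largest of the weights is at least $1/N$, so $1/\lambda_j \leq N$ when $\lambda_j$ is maximal.

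For part (ii) with $\nu = \sumitoNlambdai \mu^i$, I would bound each $\W_p^p(\nu, \mu^k)$ by exhibiting an admissible, though generally suboptimal, coupling: taking optimal plans $\pi^{ik} \in \Pi(\mu^i, \mu^k)$, the convex combination $\sumitoNlambdai \pi^{ik}$ has first marginal $\sumitoNlambdai \mu^i = \nu$ and second marginal $\sumitoNlambdai \mu^k = \mu^k$, hence lies in $\Pi(\nu, \mu^k)$. This yields $\W_p^p(\nu, \mu^k) \leq \sumitoNlambdai \W_p^p(\mu^i, \mu^k)$, and summing against $\lambda_k$ gives $\Psi_p(\nu) \leq \sum_{i,k} \lambda_i\lambda_k \W_p^p(\mu^i, \mu^k) = 2S \leq 2\Psi_p(\hat\nu)$, where the factor $2$ appears upon symmetrizing the double sum and discarding the vanishing diagonal. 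Part (iii) reduces to the same computation: by linearity of expectation, $\E[\Psi_p(\nu)] = \sumitoNlambdai \Psi_p(\mu^i) = \sum_{i,j}\lambda_i\lambda_j\W_p^p(\mu^i,\mu^j) = 2S \leq 2\Psi_p(\hat\nu)$.

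Everything here is elementary once Proposition~\ref{prop:lower_bound_nuhat} is in hand, so there is no serious obstacle; the only steps demanding care are verifying that the convex combination of optimal couplings in part (ii) is genuinely a coupling of $\nu$ and $\mu^k$ (so that it upper-bounds the transport cost), and correctly identifying the symmetric double sums with $2S$. I would also note that no property of $c_p$ beyond non-negativity and the suboptimal-coupling bound is invoked, so the argument goes through verbatim for arbitrary $p \geq 1$; the restriction to $p \in \{1,2\}$ is inherited solely from Proposition~\ref{prop:lower_bound_nuhat}.
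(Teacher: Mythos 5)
Your proposal is correct and follows essentially the same route as the paper: part (i) via the observation that $\lambda_j\Psi_p(\mu^j)$ is a sub-sum of the lower bound from Proposition~\ref{prop:lower_bound_nuhat}, part (ii) via the admissible coupling $\sumitoNlambdai \pi^{ik}\in\Pi(\nu,\mu^k)$, and part (iii) by linearity of expectation. Your closing remark that only the restriction $p\in\{1,2\}$ in Proposition~\ref{prop:lower_bound_nuhat} prevents the argument from working for general $p\geq 1$ is a correct and worthwhile observation, but the core argument is the paper's.
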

\begin{proof}
	\begin{enumerate}[(i)]
		\item Let $\nu \coloneqq \mu^j$, then we see that
		\[
		\Psi_p(\nu)
		= \Psi_p(\mu^j)
		= \sumitoNlambdai \W_p^p(\mu^i, \mu^j).
		\]
		By \eqref{eq:lower_bound_nuhat},
		\[
		\Psi_p(\hat\nu)
		\geq \sum_{i<j}^N\lambda_i\lambda_j \W_p^p(\mu^i, \mu^j) \geq \lambda_j \sumitoNlambdai \W_p^p(\mu^i, \mu^j),
		\]
		such that
		\[
		\frac{\Psi_p(\nu)}{\Psi_p(\hat\nu)}
		\leq \frac 1 {\lambda_j}.
		\]
		\item For the choice $\nu\coloneqq\sumitoNlambdai\mu^i$, taking $\pi^{ij}\in\argmin_{\pi \in \Pi (\mu^i, \mu^j)} \langle c_p, \pi\rangle$, we note that
		\[
		\sum_{j=1}^N\lambda_j \pi^{ji} \in \Pi(\nu, \mu^i).
		\]
		Hence,
		\begin{align}
			\Psi_p(\nu)
			&= \sum_{i=1}^N\lambda_i\W_p^p\Big(\sum_{j=1}^N\lambda_j\mu^j, \mu_i\Big)
			\leq \sum_{i=1}^N\lambda_i \langle c_p, \sum_{j=1}^N\lambda_j \pi^{ji}\rangle \\
			&= 2\sum_{i<j}^N\lambda_i\lambda_j\W_p^p(\mu^i, \mu^j),
		\end{align}
		such that
		\[
		\frac{\Psi_p(\nu)}{\Psi_p(\hat\nu)}
		\leq 2.
		\]
		\item This follows similarly as (ii) does by linearity of expectation.\qedhere
	\end{enumerate}
\end{proof}

In general, there is no polynomial-time algorithm that will achieve an error arbitrarily close to $1$ 
with high probability, see \cite{AB21nphard}.
In light of this result, it is interesting to see that it is possible to obtain a relative error bound of $2$ as in \cite{B20LPapprox}, but without performing any computations.
However, note that merely using a mixture of the inputs yields rather useless barycenter approximations in practice; consider, e.g., two distinct Dirac measures.

Although we will see that the bounds above are still more or less sharp for Algorithms~\ref{alg:ref}--\ref{alg:pairwise1}, these algorithms perform a lot better in practice.
Moreover, these bounds are typically drastically improved as soon as a specific problem is given, see Remark~\ref{rem:adapted_bounds} and Section~\ref{sec:numerics}.

Using one of the mentioned trivial choices as initial measures, all algorithms above aim to improve the approximation quality using the mapping $\mmean$.
Next, we show that given any approximate barycenter $\nu$, executing $\mmean$ on $\nu$ never makes the approximation worse, if we choose the OT plans $\pi^i \in \Pi(\nu, \mu^i)$ to be optimal.

\begin{proposition}\label{prop:push_better}
	Given a discrete $\nu=\sum_{k=1}^{n_\nu}\nu_k\delta(y_k)\in \mathcal P(\RR^d)$, $p\geq 1$, let
	\begin{equation}
		\pi^i \coloneqq \sum_{k=1}^{n_\nu}\sum_{l=1}^{n_i} \pi^i_{k, l} \delta(y_k, x^i_l) \in \argmin_{\pi \in \Pi (\nu, \mu^i)} \langle c_p, \pi\rangle
	\end{equation}
	be optimal transport plans.
	Then it holds
	\begin{equation}\label{eq:push_better}
		\Psi_p(\mmean(\nu)) \leq \Psi_p(\nu).
	\end{equation}
\end{proposition}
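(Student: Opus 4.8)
The plan is to bound $\Psi_p(\mmean(\nu))$ from above by exhibiting an explicit, generally \emph{suboptimal}, coupling between $\mmean(\nu)$ and each $\mu^i$, and then to invoke the defining minimality of the barycentric map $\mean$ to compare against $\Psi_p(\nu)$. Since the given plans $\pi^i$ are optimal, I first record
\[
\Psi_p(\nu)
= \sumitoNlambdai \langle c_p, \pi^i\rangle
= \sumitoNlambdai \sum_{k=1}^{n_\nu}\sum_{l=1}^{n_i} \pi^i_{k,l}\,\Vert y_k - x^i_l\Vert^p,
\]
which is the quantity against which everything will be measured.

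Next, write $\tilde\nu \coloneqq \mmean(\nu) = \sum_{k=1}^{n_\nu}\nu_k\delta(m_k)$ with $m_k = \mean(y_k)$. For each $i$ I form the candidate coupling
\[
\tilde\pi^i \coloneqq \sum_{k=1}^{n_\nu}\sum_{l=1}^{n_i} \pi^i_{k,l}\,\delta(m_k, x^i_l),
\]
which simply relocates the source atom $y_k$ of $\pi^i$ to $m_k$ while leaving all transported masses unchanged. The crucial verification is that $\tilde\pi^i\in\Pi(\tilde\nu, \mu^i)$: because $\pi^i\in\Pi(\nu,\mu^i)$, summing over $k$ recovers the second marginal $\mu^i$, and the mass placed at $m_k$ is $\sum_l \pi^i_{k,l} = \nu_k$, so the first marginal is exactly $\tilde\nu$ (if several $m_k$ coincide their masses simply add, which does no harm). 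As $\tilde\pi^i$ is admissible but not necessarily optimal, $\W_p^p(\tilde\nu, \mu^i)\le\langle c_p, \tilde\pi^i\rangle$, and therefore
\[
\Psi_p(\tilde\nu)
\le \sumitoNlambdai \sum_{k=1}^{n_\nu}\sum_{l=1}^{n_i} \pi^i_{k,l}\,\Vert m_k - x^i_l\Vert^p.
\]

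Finally I compare the two resulting sums term by term in $k$. For each fixed $k$, the point $m_k$ is by definition a minimizer of $m\mapsto\sumitoNlambdai\sum_{l}\frac{\pi^i_{k,l}}{\nu_k}\Vert m - x^i_l\Vert^p$ over $\RR^d$; plugging in the competitor $m=y_k$ and multiplying through by $\nu_k$ gives
\[
\sumitoNlambdai \sum_{l=1}^{n_i} \pi^i_{k,l}\,\Vert m_k - x^i_l\Vert^p
\le \sumitoNlambdai \sum_{l=1}^{n_i} \pi^i_{k,l}\,\Vert y_k - x^i_l\Vert^p,
\]
and summing over $k$ yields $\Psi_p(\tilde\nu)\le\Psi_p(\nu)$, as claimed. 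I do not anticipate a genuine obstacle: the entire argument hinges on the single idea of using the relocated plan $\tilde\pi^i$ as a test coupling, after which minimality of $m_k$ closes the gap. The only points deserving a moment's care are checking that $\tilde\pi^i$ remains a legitimate coupling when pushed atoms collide, and observing that the reasoning is valid verbatim for $p=1$, since $m_k$ is a true minimizer of the geometric-median objective and the tie-breaking midpoint convention in the collinear case is irrelevant to the inequality.
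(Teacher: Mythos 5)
Your argument is correct and coincides with the paper's own proof: both construct the relocated coupling $\tilde\pi^i$ with source atoms moved from $y_k$ to $m_k$, bound $\W_p^p(\mmean(\nu),\mu^i)$ by its cost, and then invoke the minimality of $m_k$ against the competitor $y_k$. Your additional remarks on colliding atoms and the $p=1$ tie-breaking convention are sound but not needed beyond what the paper states.
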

\begin{proof}
	By definition of $\pi^i$, we have for all $i=1, \dots, N$ that
	\begin{equation}
		\W_p^p(\nu, \mu^i) = \langle c_p, \pi^i\rangle = \sum_{k=1}^{n_\nu}\sum_{l=1}^{n_i}\pi^i_{k, l}\Vert y_k - x_l^i \Vert^p.
	\end{equation}
	Set
	\begin{equation}
		\tilde\pi^i \coloneqq \sum_{k=1}^{n_\nu}\sum_{l=1}^{n_i} \pi^i_{k, l} \delta(m_k, x_l^i) \in \Pi(\mmean(\nu), \mu^i),
	\end{equation}
	where $m_k = \mean(y_k)$.
	Then it holds that
	\begin{align}
		\Psi_p(\mmean(\nu))
		&= \sumitoNlambdai \W_p^p(\mmean(\nu), \mu^i)
		\leq \sumitoNlambdai \langle c_p, \tilde\pi^i\rangle \\
		&= \sumitoNlambdai \sum_{k=1}^{n_\nu}\sum_{l=1}^{n_i}\pi^i_{k, l}\Vert m_k-x_l^i \Vert^p
		= \sum_{k=1}^{n_\nu} \nu_k \sumitoNlambdai\sum_{l=1}^{n_i}\frac{\pi^i_{k, l}}{\nu_k} \Vert m_k-x_l^i\Vert^p \\
		&= \sum_{k=1}^{n_\nu} \nu_k \min_{m\in \RR^d}\sumitoNlambdai\sum_{l=1}^{n_i}\frac{\pi^i_{k, l}}{\nu_k} \Vert m-x_l^i\Vert^p \\
		&\leq \sum_{k=1}^{n_\nu}\nu_k\sumitoNlambdai\sum_{l=1}^{n_i}\frac{\pi^i_{k, l}}{\nu_k} \Vert y_k-x_l^i\Vert^p
		= \sumitoNlambdai \sum_{k=1}^{n_\nu}\sum_{l=1}^{n_i}\pi^i_{k, l} \Vert y_k-x_l^i\Vert^p \\
		&= \sumitoNlambdai \W_p^p(\nu, \mu^i)
		= \Psi_p(\nu).\qedhere
	\end{align}
\end{proof}

Combining the results above, we immediately get the following error bounds for the algorithms introduced in Section~\ref{sec:algorithms}.
\begin{corollary}\label{cor:upper_bounds}
	Let $p\in \{1, 2 \}$ and let $\hat\nu$ be an optimal barycenter.
	\begin{enumerate}[(i)]
		\item If $\tilde\nu$ is obtained by Algorithm~\ref{alg:ref} (case $p=2$) or Algorithm~\ref{alg:ref1} (case $p=1$), then it holds that
		\begin{equation}
			\frac{\Psi_p(\tilde\nu)}{\Psi_p(\hat\nu)} \leq \frac{1}{\lambda_1} \quad \text{or} \quad \frac{\Psi_p(\tilde\nu)}{\Psi_p(\hat\nu)} \leq \frac{1+\varepsilon}{\lambda_1},
		\end{equation}
		respectively.
		Moreover, if instead the reference measure is chosen randomly with probabilities equal to the corresponding $\lambda_i$, then
		\begin{equation}
		\frac{\E[\Psi_p(\tilde\nu)]}{\Psi_p(\hat\nu)} \leq 2
		\quad \text{or}\quad
		\frac{\E[\Psi_p(\tilde\nu)]}{\Psi_p(\hat\nu)} \leq 2(1+\varepsilon).
		\end{equation}
		\item If $\tilde\nu$ is obtained by Algorithm~\ref{alg:pairwise} (case $p=2$) or Algorithm~\ref{alg:pairwise1} (case $p=1$), then it holds that
		\begin{equation}
		\frac{\Psi_p(\tilde\nu)}{\Psi_p(\hat\nu)} \leq 2
		\quad \text{or}\quad
		\frac{\Psi_p(\tilde\nu)}{\Psi_p(\hat\nu)} \leq 2(1+\varepsilon),
		\end{equation}
		respectively.
	\end{enumerate}
\end{corollary}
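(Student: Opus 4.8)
The plan is to assemble the three preceding propositions into the stated bounds, handling the two parts separately and absorbing the Weiszfeld error for $p=1$ as a multiplicative factor $(1+\varepsilon)$.

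For part (i), Algorithm~\ref{alg:ref} initializes with $\nu=\mu^1$ and uses the \emph{optimal} plans $\pi^i$ from \eqref{eq:pii_ref_def}, so that $\tilde\nu=\mmean(\nu)$. For $p=2$ the barycentric map is computed exactly, Proposition~\ref{prop:push_better} applies verbatim and gives $\Psi_2(\tilde\nu)\leq\Psi_2(\mu^1)$, which combined with Proposition~\ref{prop:init_bounds}(i) yields $\Psi_2(\tilde\nu)/\Psi_2(\hat\nu)\leq 1/\lambda_1$. For $p=1$ the only change is that each $m_k$ is computed by Weiszfeld's method up to the factor $(1+\varepsilon)$ guaranteed by Remark~\ref{rem:weiszfeld_accuracy}; inspecting the chain of inequalities in the proof of Proposition~\ref{prop:push_better}, the step that invokes the exact minimizer picks up precisely this factor, so $\Psi_1(\tilde\nu)\leq(1+\varepsilon)\Psi_1(\mu^1)$ and the bound $(1+\varepsilon)/\lambda_1$ follows. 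For the randomized variant, let $\tilde\nu^{(j)}$ denote the output when $\mu^j$ is chosen as reference; the above gives $\Psi_p(\tilde\nu^{(j)})\leq\Psi_p(\mu^j)$ for $p=2$ (with an extra factor $(1+\varepsilon)$ for $p=1$), and averaging over $j$ with weights $\lambda_j$ together with Proposition~\ref{prop:init_bounds}(iii) produces the claimed bounds on $\E[\Psi_p(\tilde\nu)]$.

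For part (ii), the subtlety is that the pairwise plans $\pi^i=\sum_{j=1}^N\lambda_j\pi^{ji}$ from \eqref{eq:pii_pairwise_def} are admissible members of $\Pi(\nu,\mu^i)$ but are \emph{not} optimal from $\nu=\sumitoNlambdai\mu^i$, so Proposition~\ref{prop:push_better} cannot be invoked directly. The key observation is that optimality of the $\pi^i$ enters that proof only in identifying $\Psi_p(\nu)$ with $\sumitoNlambdai\langle c_p,\pi^i\rangle$; the one-sided estimate
\[
\Psi_p(\mmean(\nu))\leq\sumitoNlambdai\langle c_p,\pi^i\rangle
\]
holds for \emph{any} admissible plans, since it relies only on the minimizing property of the $m_k$ (and, for $p=1$, on the Weiszfeld factor, which replaces the right-hand side by $(1+\varepsilon)$ times the same quantity). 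I would then expand the right-hand side via the pairwise structure,
\[
\sumitoNlambdai\langle c_p,\pi^i\rangle
=\sumitoNlambdai\sum_{j=1}^N\lambda_j\langle c_p,\pi^{ji}\rangle
=\sum_{i,j=1}^N\lambda_i\lambda_j\W_p^p(\mu^i,\mu^j)
=2\sum_{i<j}^N\lambda_i\lambda_j\W_p^p(\mu^i,\mu^j),
\]
where the second equality uses that each $\pi^{ji}$ is optimal, hence $\langle c_p,\pi^{ji}\rangle=\W_p^p(\mu^j,\mu^i)$. Finally, Proposition~\ref{prop:lower_bound_nuhat} bounds this last sum by $2\Psi_p(\hat\nu)$, giving the ratio $2$ for $p=2$ and $2(1+\varepsilon)$ for $p=1$.

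The main obstacle, and the only genuinely nonroutine point, is exactly this decoupling in part (ii): one must recognize that the conclusion of Proposition~\ref{prop:push_better} is really a one-sided estimate valid for arbitrary transport plans, so that it can be paired with the \emph{lower} bound of Proposition~\ref{prop:lower_bound_nuhat} rather than with Proposition~\ref{prop:init_bounds}. Everything else is bookkeeping, including threading the factor $(1+\varepsilon)$ through the single minimization step in the case $p=1$.
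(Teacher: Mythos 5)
Your proof is correct and follows the same skeleton as the paper's (one-sentence) proof: combine Proposition~\ref{prop:init_bounds} with the improvement step of Proposition~\ref{prop:push_better}, threading the factor $(1+\varepsilon)$ through the single minimization step for $p=1$. The one place where you genuinely add something is part (ii): you are right that Proposition~\ref{prop:push_better} as stated does not literally apply to the pairwise algorithm, because the plans $\pi^i=\sum_{j}\lambda_j\pi^{ji}$ are admissible but generally not optimal couplings of $\nu=\sumitoNlambdai\mu^i$ and $\mu^i$, and your fix --- extracting the one-sided estimate $\Psi_p(\mmean(\nu))\leq\sumitoNlambdai\langle c_p,\pi^i\rangle$ valid for arbitrary admissible plans, evaluating the right-hand side as $2\sum_{i<j}\lambda_i\lambda_j\W_p^p(\mu^i,\mu^j)$, and closing with Proposition~\ref{prop:lower_bound_nuhat} --- is exactly the computation the paper carries out later in Remark~\ref{rem:adapted_bounds}, so it is fully in the spirit of the text. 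An alternative that would let you invoke Proposition~\ref{prop:push_better} verbatim is to use \eqref{eq:average_of_alg1}, namely $\tilde\nu=\sumitoNlambdai\tilde\nu^i$ with each $\tilde\nu^i$ a reference-algorithm output built from genuinely optimal plans, together with convexity of $\nu\mapsto\W_p^p(\nu,\mu^i)$, giving $\Psi_p(\tilde\nu)\leq\sumitoNlambdai\Psi_p(\tilde\nu^i)\leq\sumitoNlambdai\Psi_p(\mu^i)=2\sum_{i<j}\lambda_i\lambda_j\W_p^p(\mu^i,\mu^j)$; both routes yield the same bound.
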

\begin{proof}
	This follows immediately by combining Propositions~\ref{prop:init_bounds} and \ref{prop:push_better}, and the fact that
	\[
	\sumitoNlambdai \sum_{l=1}^{n_i}\frac{\pi^i_{k, l}}{\nu_k}\Vert m - x_l^i\Vert
	\]
	is only optimized by $m_k$ up to a factor $(1+\varepsilon)$ for every $k=1, \dots, n_\nu$ in the case $p=1$.
\end{proof}

\begin{remark}\label{rem:adapted_bounds}
	Next, we show how to improve on the $2$-approximation bound for a specific given problem.
	We assume that we are given optimal or close to optimal transport plans
	\begin{equation}
		\pi^i
		= \sum_{k=1}^{n_\nu}\sum_{l=1}^{n_i} \pi^i_{k, l} \delta( y_k, x_l^i) \in \Pi (\nu, \mu^i), \quad i=1, \dots, N.
	\end{equation}
	In case of the pairwise algorithm (Algorithms~\ref{alg:pairwise} and \ref{alg:pairwise1}), we use
	\begin{equation}
		\pi^i
		= \sum_{j=1}^N\lambda_j \pi^{ji} \in \Pi(\nu, \mu^i),
		\quad \text{where} \quad
		\pi^{ji}\in \argmin_{\pi\in\Pi(\mu^j, \mu^i)} \langle c_p, \pi \rangle.
	\end{equation}
	Given our approximate barycenter
	\begin{equation}
		\tilde\nu
		= \sum_{k=1}^{n_\nu} \nu_k \delta(m_k), \quad m_k = \mean(y_k),
	\end{equation}
	consider again
	\begin{equation}
		\tilde\pi^i
		\coloneqq  \sum_{k=1}^{n_\nu}\sum_{l=1}^{n_i} \pi^i_{k, l} \delta( m_k, x_l^i) \in \Pi (\tilde\nu, \mu^i).
	\end{equation}
	Then
	\begin{equation}
		\Psi_p(\tilde\nu)
		= \sumitoNlambdai \W_p^p(\tilde\nu, \mu^i)
		\leq \sumitoNlambdai\langle c_p, \tilde\pi^i\rangle
		= \sumitoNlambdai\sum_{k=1}^{n_\nu}\sum_{l=1}^{n_i} \pi^i_{k, l} \Vert m_k- x_l^i\Vert^p.
	\end{equation}
	Together with \eqref{eq:lower_bound_nuhat}, this gives
	\begin{equation}\label{eq:adapted_bound_general}
		\frac{\Psi_p(\tilde\nu)}{\Psi_p(\hat\nu)}
		\leq \frac{\sumitoNlambdai\sum_{k=1}^{n_\nu}\sum_{l=1}^{n_i} \pi^i_{k, l} \Vert m_k- x_l^i\Vert^p}{\sum_{i<j}^N \lambda_i\lambda_j \W_p^p(\mu^i, \mu^j)}.
	\end{equation}
	In the case $p=2$, since
	\begin{equation}
		m_k
		= \mean(y_k)
		= \sumitoNlambdai \sum_{l=1}^{n_i} \frac{\pi^i_{k, l}}{\nu_k} x^i_l
		\quad \text{with} \quad \sumitoNlambdai \sum_{l=1}^{n_i} \frac{\pi^i_{k, l}}{\nu_k} = 1,
	\end{equation}	
	by incorporating \eqref{eq:sunflower}, the denominator in \eqref{eq:adapted_bound_general} simplifies to
	\begin{align*}
		\Psi_2(\tilde\nu)
%		&\leq \sum_{k=1}^{n_\nu} \nu_k \min_{y\in \RR^d}\sumitoNlambdai\sum_{l=1}^{n_i}\frac{\pi^i_{k, l}}{\nu_k} \Vert y-x_l^i\Vert^2
		&\leq \sum_{k=1}^{n_\nu} \nu_k \sumitoNlambdai\sum_{l=1}^{n_i}\frac{\pi^i_{k, l}}{\nu_k} \Vert m_k-x_l^i\Vert^2 \\
		&= \sum_{k=1}^{n_\nu} \nu_k \Big( \sumitoNlambdai\sum_{l=1}^{n_i}\frac{\pi^i_{k, l}}{\nu_k} \Vert y_k-x_l^i\Vert^2 - \Vert m_k-y_k\Vert^2\Big) \\
		&= \sumitoNlambdai \sum_{k=1}^{n_\nu}\sum_{l=1}^{n_i}\pi^i_{k, l}\Vert m_k-x_l^i\Vert^2 -\sum_{k=1}^{n_\nu} \nu_k\Vert m_k-y_k\Vert^2 \\
		&= \Psi_2(\nu) -\sum_{k=1}^{n_\nu} \nu_k\Vert m_k-y_k\Vert^2,
	\end{align*}
	such that by Proposition~\ref{prop:init_bounds} \eqref{prop:init_bound_mixture}, we get
	\begin{equation}\label{eq:adapted_bound_2}
		\frac{\Psi_2(\tilde\nu)}{\Psi_2(\hat\nu)}
		\leq 2 - \frac{\sum_{k=1}^{n_\nu} \nu_k\Vert  m_k-y_k\Vert^2}{\sum_{i<j}^N\lambda_i\lambda_j \W_2^2(\mu^i, \mu^j)}.
	\end{equation}
	Either way, for both $p=1, 2$, the right-hand sides of \eqref{eq:adapted_bound_general} and \eqref{eq:adapted_bound_2} can be evaluated with almost no computational overhead after the execution of Algorithms~\ref{alg:pairwise} and \ref{alg:pairwise1}, since the optimal transport plans $\pi^{ij}$ between $\mu^i$ and $\mu^j$ have already been computed.
	This usually gives bounds much closer to one than the worst-case guarantees in Corollary~\ref{cor:upper_bounds}.
	
	\iffalse
	In case of the reference algorithm, for the purpose of speed, we do not want to compute all pairwise OT plans for the denominator in \eqref{eq:adapted_bound_general} and \eqref{eq:adapted_bound_2}, since this is the bottleneck of the pairwise algorithm, which usually gives a better approximation.
	In order to still obtain a computationally cheap bound for the reference algorithm, where we only have $\pi^{ij}$ for $i=1$, we can at least lower bound the Wasserstein distances using Jensen's inequality.
	For the random variable $(X^i, Y^i)\sim \pi^{ij}$, we have that
	\begin{equation}
		\W_p^p(\mu^i, \mu^j)
		= \E\Vert X^i-X^j\Vert^p
		\geq \Vert \E X^i - \E X^j \Vert^p,
	\end{equation}
	such that we can bound \eqref{eq:adapted_bound_general} and \eqref{eq:adapted_bound_2} further by
	\begin{equation}
		\frac{\Psi_p(\tilde\nu)}{\Psi_p(\hat\nu)}
		\leq \frac{\sumitoNlambdai\sum_{k=1}^{n_\nu}\sum_{l=1}^{n_i} \pi^i_{k, l} \Vert m_k- x_l^i\Vert^p}{\lambda_1\sumitoNlambdai \W_p^p(\mu^1, \mu^i) + \sum_{2=i<j}^N \lambda_i\lambda_j \Vert \E X^i - \E X^j \Vert^p},
	\end{equation}
	and more specifically, for $p=2$,
	\begin{equation}
		\frac{\Psi_2(\tilde\nu)}{\Psi_2(\hat\nu)}
		\leq \frac{\sumitoNlambdai \W_2^2(\mu^1, \mu^i)-\sum_{k=1}^{n_\nu} \nu_k\Vert  m_k-y_k\Vert^2}{\lambda_1\sumitoNlambdai \W_2^2(\mu^1, \mu^i) + \sum_{2=i<j}^N \lambda_i\lambda_j \Vert \E X^i - \E X^j \Vert^2}.
	\end{equation}
	\fi
\end{remark}

Finally, we discuss the sharpness of the bounds in Corollary~\ref{cor:upper_bounds}.
\begin{proposition}\label{prop:sharpness}
	Let $N\geq 2$ and consider the case with $\lambda=(\frac 1 N, \dots, \frac 1 N)\in \Delta_N$.
	There exist measures $\mu^1, \mu^2 = \mu^3 = \dots = \mu^N$, such that if $\hat\nu$ is an optimal barycenter, the following hold true:
	\begin{enumerate}[(i)]
		\item Let $\tilde\nu$ be computed with Algorithm~\ref{alg:ref}, then
		\begin{equation}
			\frac{\Psi_2(\tilde\nu)}{\Psi_2(\hat\nu)}
			= N
			= \frac{1}{\lambda_1}.
		\end{equation}
		If the reference measure is chosen uniformly at random, then
		\begin{equation}
			\frac{\E[\Psi_2(\tilde\nu)]}{\Psi_2(\hat\nu)}
			= 2 - \frac 1 N
			\overset{N\to \infty}{\longrightarrow} 2.
		\end{equation}
		
		\item Let $\tilde\nu$ be computed with Algorithm~\ref{alg:ref1}, then
		\begin{equation}
		\frac{\Psi_1(\tilde\nu)}{\Psi_1(\hat\nu)}
		= N-1
		= \frac 1 {\lambda_1} - 1.
		\end{equation}
		If the reference measure is chosen uniformly at random, then
		\begin{equation}
		\frac{\E[\Psi_1(\tilde\nu)]}{\Psi_1(\hat\nu)}
		= 2\Big(1 - \frac 1 N\Big)
		\overset{N\to \infty}{\longrightarrow} 2.
		\end{equation}
		
		\item Let $\tilde\nu$ be computed with Algorithm~\ref{alg:pairwise}, then
		\begin{equation}
			\frac{\Psi_2(\tilde\nu)}{\Psi_2(\hat\nu)}
			\geq \frac{N-1}{N}\Big( 1+ \frac{N-1}{N} \Big)
			\overset{N\to \infty}{\longrightarrow} 2.
		\end{equation}
		
		\item Let $\tilde\nu$ be computed with Algorithm~\ref{alg:pairwise1}, then
		\begin{equation}
		\frac{\Psi_1(\tilde\nu)}{\Psi_1(\hat\nu)}
		= 2 - \frac 1 N
		\overset{N\to \infty}{\longrightarrow} 2.
		\end{equation}
	\end{enumerate}
\end{proposition}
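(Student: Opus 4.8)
The plan is to exhibit one explicit, very low-dimensional family of inputs and to evaluate every quantity by hand, exploiting that the hypothesis $\mu^2=\dots=\mu^N$ collapses all pairwise costs $\W_p^p(\mu^i,\mu^j)$ with $i,j\geq 2$ to zero. Concretely I would take $\mu^1=\delta(0)$ and $\mu^2=\dots=\mu^N=\tfrac12\delta(-e_1)+\tfrac12\delta(e_1)$ on the line through $e_1$, so that $\W_p^p(\mu^1,\mu^i)=1$ for every $i\geq 2$ and the only surviving terms on the right-hand side of \eqref{eq:lower_bound_nuhat} are the $N-1$ pairs $(1,i)$, giving $\sum_{i<j}\lambda_i\lambda_j\W_p^p(\mu^i,\mu^j)=\tfrac{N-1}{N^2}$. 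The asymmetry between the singleton $\mu^1$ and the two-point majority is exactly what is needed to make $\mu^1$ a poor reference.

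First I would pin down the optimal value $\Psi_p(\hat\nu)$. For $p=2$ I would check that the lower bound \eqref{eq:lower_bound_nuhat} is attained: the associated multi-marginal coupling sends all the mass of $\mu^2,\dots,\mu^N$ together, i.e.\ $(0,-e_1,\dots,-e_1)$ and $(0,e_1,\dots,e_1)$ each with weight $\tfrac12$, whose barycentric image is $\hat\nu=\tfrac12\delta(-\tfrac{N-1}{N}e_1)+\tfrac12\delta(\tfrac{N-1}{N}e_1)$ with $\Psi_2(\hat\nu)=\tfrac{N-1}{N^2}$, matching the bound. For $p=1$ the bound is not tight, so instead I would argue directly that, since $\mu^2$ carries Fréchet weight $\tfrac{N-1}{N}\geq\tfrac12$, the triangle inequality forces the barycenter to be $\hat\nu=\mu^2$, whence $\Psi_1(\hat\nu)=\tfrac1N$.

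Next I would trace the algorithms. The key structural fact is that the reference algorithm started from $\mu^1=\delta(0)$ leaves its single atom fixed: for $p=2$ its image is the mean $\tfrac1N\cdot 0+\tfrac{N-1}{N}\cdot\tfrac12(-e_1+e_1)=0$, and for $p=1$ the weighted median of the collinear targets $\{0,\pm e_1\}$ is again $0$. Hence $\tilde\nu=\mu^1$ and the ratios read off as $\Psi_2(\mu^1)/\Psi_2(\hat\nu)=\tfrac{(N-1)/N}{(N-1)/N^2}=N$ for (i) and $\Psi_1(\mu^1)/\Psi_1(\hat\nu)=\tfrac{(N-1)/N}{1/N}=N-1$ for (ii). Running the reference algorithm from any $\mu^i$ with $i\geq 2$ instead recovers the optimum ($\hat\nu$ for $p=2$, $\mu^2$ for $p=1$), so the expected-value claims follow by averaging the two ratios with weights $\tfrac1N$ and $\tfrac{N-1}{N}$, and the pairwise claims follow from $\tilde\nu=\tfrac1N\tilde\nu^1+\tfrac{N-1}{N}\tilde\nu^{(2)}$ (cf.\ \eqref{eq:average_of_alg1}) after computing the single remaining one-dimensional optimal transport from $\tfrac1N\delta(0)+\tfrac{N-1}{N}\hat\nu$ (resp.\ $\mu^2$) to $\mu^1$ and $\mu^2$ explicitly; for $p=2$ this yields $\Psi_2(\tilde\nu)=\tfrac{(N-1)(2N-1)}{N^3}$ and hence the claimed lower bound of (iii).

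The main obstacle, and where I expect to spend the most care, is the non-smoothness of the $p=1$ barycentric map underlying (ii) and especially (iv). Because the geometric median snaps to any point carrying at least half the weight, I must verify both that the reference-from-$\mu^1$ output genuinely stays at $0$ (no plateau at the level $\tfrac12$, so the midpoint tie-breaking convention is not triggered for $N\geq 3$) and that the reference-from-$\mu^i$ output snaps exactly back to $\mu^2$, the latter being forced since the self-couplings between the identical measures are necessarily the identity. The sensitive point is then the precise constant in (iv): evaluating $\Psi_1$ of the mixture $\tfrac1N\delta(0)+\tfrac{N-1}{N}\mu^2$ by monotone one-dimensional transport, together with convexity of $\Psi_1$ (which only yields that the pairwise ratio is bounded by the averaged reference ratio), is exactly the computation I would scrutinize most closely, since reconciling it with the stated value is far less routine than the $p=2$ arithmetic behind (i) and (iii).
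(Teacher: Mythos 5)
Your proposal follows essentially the same route as the paper: the identical worst-case instance $\mu^1=\delta(0)$, $\mu^2=\dots=\mu^N=\tfrac12(\delta(-1)+\delta(1))$, the same traces of the four algorithms, and the same use of the lower bound \eqref{eq:lower_bound_nuhat} (resp.\ the candidate $\mu^2$ for $p=1$) to control $\Psi_p(\hat\nu)$. Two of your numbers differ from the paper's, and in both cases your arithmetic is the one to trust. In (iii), the paper evaluates $\Wtwo^2(\tilde\nu,\mu^i)$ for $i\ge 2$ as $\tfrac1N(\tfrac{N-1}{N})^2+\tfrac{N-1}{N}\tfrac{1}{N^2}$, i.e.\ it charges the atom of mass $\tfrac1N$ sitting at $0$ only the distance $\tfrac{N-1}{N}$ rather than the full distance $1$ to $\supp(\mu^i)=\{\pm1\}$; the correct value is $\tfrac{N^2+N-1}{N^3}$, which yields your $\Psi_2(\tilde\nu)=\tfrac{(N-1)(2N-1)}{N^3}$ and the exact ratio $2-\tfrac1N$ --- larger than, hence still consistent with, the stated inequality. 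In (iv), your instinct that reconciling the constant is the delicate point is justified: the computation gives $\Psi_1(\tilde\nu)/\Psi_1(\hat\nu)=2(1-\tfrac1N)=2-\tfrac2N$, which is exactly what the paper's own proof derives; the ``$2-\tfrac1N$'' in the statement of (iv) is evidently a typo (both expressions tend to $2$, so the sharpness conclusion is unaffected). Your extra care in verifying that the candidate barycenters are genuinely optimal (matching the lower bound \eqref{eq:lower_bound_nuhat} for $p=2$, and the majority-weight/triangle-inequality argument for $p=1$) is what actually upgrades the paper's ``$\geq$'' chains to the claimed equalities in (i) and (ii), and the $N=2$ median tie you flag is harmless since the midpoint convention still lands on an optimal barycenter there.
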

\begin{proof}
	We consider
	\[
	\mu^1 \coloneqq \delta(0), \qquad
	\mu^2= \ldots = \mu^N \coloneqq \frac 1 2 (\delta(-1)+\delta(1)).
	\]
	\begin{enumerate}[(i)]
		\item For $\pi^i$ defined as in Algorithm~\ref{alg:ref}, it holds 
		\[
		\pi^i = \frac 1 2 (\delta(0, -1) + \delta(0, 1)), \qquad i=2, \dots, N.
		\]
		and thus
		\[
		\tilde\nu
%		= \sum_{j=1}^{n_1} \mu^1_j \delta\Big(\sum_{k=1}^{n_i} \frac{\pi^i_{j, k}}{\mu^1_j} x^i_k\Big)
		= \delta\Big(\frac 1 2 (-1+1)\Big)
		= \delta(0)
		= \mu^1.
		\]
		Thus,
		\[
		\Psi_2(\tilde\nu)
		= \sumitoNlambdai \Wtwo^2(\tilde\nu, \mu^i)
		= \frac{N-1}{N}.
		\]
		On the other hand, consider
		\[
		\nu= \frac 1 2 \Big(\delta\Big(-\frac{N-1}{N}\Big) + \delta\Big(\frac{N-1}{N}\Big)\Big),
		\]
		then
		\[
		\Psi_2(\nu)
		= \sumitoNlambdai \Wtwo^2(\nu, \mu^i)
		= \frac 1 N \Big( \Big( \frac{N-1}{N} \Big)^2 + (N-1)\Big( \frac{1}{N} \Big)^2 \Big)
		= \frac{N-1}{N^2},
		\]
		such that
		\[
		\frac{\Psi_2(\tilde\nu)}{\Psi_2(\hat\nu)}
		\geq \frac{\Psi_2(\tilde\nu)}{\Psi_2(\nu)}
		= N
		= \frac 1 {\lambda_1}.
		\]
		\item We only need to compute the following medians:
		\begin{align*}
		&\argmin_{m\in \RR^d} \frac 1 N \Vert 0 - m \Vert + \frac 1 2 \sum_{i=2}^N \frac 1 N (\Vert -1 - m\Vert + \Vert 1 - m\Vert) = 0, \\
		&\argmin_{m\in \RR^d} \frac 1 N \Vert 0 - m \Vert + \sum_{i=2}^N \frac 1 N (\Vert -1 - m\Vert) = -1, \quad \text{and}\\
		&\argmin_{m\in \RR^d} \frac 1 N \Vert 0 - m \Vert + \sum_{i=2}^N \frac 1 N (\Vert 1 - m\Vert) = 1.
		\end{align*}
		Then we see that $\tilde\nu = \mu^1$, such that
		\[
		\Psi_1(\tilde\nu)
		= \sumitoNlambdai \W_1(\mu^1, \mu^i)
		= \frac 1 N \cdot 0 + \Big(1-\frac 1 N\Big) \cdot 1
		= 1-\frac 1 N,
		\]
		and for any $j\in \{ 2, \dots, N \}$,
		\[
		\Psi_1(\mu^j)
		= \frac 1 N\cdot 1 + \Big(1- \frac 1 N\Big)\cdot 0
		= \frac 1 N,
		\]
		which leads to
		\[
		\frac{\Psi_1(\tilde\nu)}{\Psi_1(\hat\nu)}
		\geq 
		\frac{\Psi_1(\tilde\nu)}{\Psi_1(\mu^j)}
		= N-1
		= \frac{1}{\lambda_1}-1.
		\]
		For the randomized case, we get
		\begin{align}
			\mathbb E [\Psi_1(\tilde\nu)]
			&= \frac 1 N \Psi_1(\mu^1) + \Big(1- \frac 1 N\Big) \Psi_1(\mu^j)
			= \frac 1 N\cdot \Big(1- \frac 1 N\Big) + \Big(1- \frac 1 N\Big)\cdot \frac 1 N \\
			&= 2\frac 1 N\Big(1- \frac 1 N\Big),
		\end{align}
		such that
		\[
		\frac{\mathbb E [\Psi_1(\tilde\nu)]}{\Psi_1(\hat\nu)}
		\geq\frac{2\frac 1 N (1- \frac 1 N)}{\frac 1 N}
		= 2\Big(1- \frac 1 N\Big).
		\]
		
		\item We get for $i=2, \dots, N$ that
		\[
		\pi^{ij} = \begin{cases}
		\frac 1 2 (\delta(-1,0)+\delta(1, 0)), & j=1, \\
		\frac 1 2 (\delta(-1,-1)+\delta(1, 1)), & j=2, \dots, N,
		\end{cases}
		\]
		and hence
		\begin{align*}
		\tilde\nu^i
%		&= \sum_{j=1}^{n_i}\mu^i_j \delta\Big( \sum_{j=1}^N\lambda_j \sum_{k=1}^{n_j}\frac{\pi^j_{j, k}}{\mu^i_j} x^l_k \Big) \\
		&= \frac 1 2 \Big( \delta\Big( \frac{N-1}{N}\cdot (-1) + \frac 1 N \cdot 0 \Big) 
		+ \Big( \delta\Big( \frac{N-1}{N}\cdot 1 + \frac 1 N \cdot 0 \Big) \Big) \\
		&= \frac 1 2 \Big( \delta\Big( -\frac{N-1}{N} \Big) + \delta\Big( \frac{N-1}{N} \Big) \Big).
		\end{align*}
		Thus,
		\[
		\tilde\nu
		= \frac 1 N \delta(0) + \frac{N-1}{2N} \Big( \delta\Big( -\frac{N-1}{N} \Big) + \delta\Big( \frac{N-1}{N} \Big) \Big).
		\]
		Hence, it is easy to compute that 
		\[
		\Wtwo^2(\tilde\nu, \mu^i) = \begin{cases}
		\frac{N-1}{N} ( \frac{N-1}{N} )^2 = ( \frac{N-1}{N} )^3, &i=1 \\
		\frac{1}{N} ( \frac{N-1}{N} )^2 + \frac{N-1}{N} ( \frac{1}{N} )^2 = \frac 1 {N^3} N(N-1) = \frac{N-1}{N^2}, &i=2, \dots, N,
		\end{cases}
		\]
		such that
		\[
		\Psi_2(\tilde\nu)
		= \frac 1 N \Big(\Big( \frac{N-1}{N}\Big)^3 + (N-1)\Big(\frac{N-1}{N^2}\Big) \Big)
		= \frac{N-1}{N^2} \Big( \Big(\frac{N-1}{N}\Big)^2 + \frac{N-1}{N} \Big).
		\]
		Finally, considering
		\[
		\nu= \frac 1 2 \Big(\delta\Big(-\frac{N-1}{N}\Big) + \delta\Big(\frac{N-1}{N}\Big)\Big),
		\]
		we get
		\[
		\frac{\Psi_2(\tilde\nu)}{\Psi_2(\hat\nu)}
		\geq 
		\frac{\Psi_2(\tilde\nu)}{\Psi_2(\nu)}
		= \Big(\frac{N-1}{N}\Big)^2 + \frac{N-1}{N}
		= \frac{N-1}{N}\Big( 1 + \frac{N-1}{N}\Big)
		\overset{N\to \infty}{\longrightarrow} 2.
		\]
		
		\item In this case, we get
		\[
		\tilde\nu
		= \frac 1 N \delta(1) + \frac{N-1}{2N}\Big( \delta(-1) + \delta(1) \Big).
		\]
		Compute
		\[
		\Psi_1(\tilde\nu)
		= \sumitoNlambdai \W_1(\tilde\nu, \mu^i)
		= \frac 1 N \cdot \frac{N-1}{N} + \frac{N-1}{N} \cdot \frac 1 N
		= \frac{2(N-1)}{N^2}.
		\]
		On the other hand, for any $j\in \{ 2, \dots, N \}$,
		\[
		\Psi_1(\nu^j)
		= \frac 1 N \W_1(\nu^j, \nu^1)
		= \frac 1 N,
		\]
		such that
		\[
		\frac{\Psi_1(\tilde\nu)}{\Psi_1(\hat\nu)}
		\geq \frac{\Psi_1(\tilde\nu)}{\Psi_1(\nu^j)}
		= 2\frac{\frac{N-1}{N^2}}{\frac 1 N}
		= 2\Big( 1- \frac 1 N \Big)
		\overset{N\to \infty}{\longrightarrow} 2.\qedhere
		\]
	\end{enumerate}
\end{proof}
\begin{remark}
	Intuitively, the example used in the proof of Proposition~\ref{prop:sharpness} is based on the fact that the analyzed algorithms can not split $\mu^1=\delta(0)$ into two Dirac measures with weight $1/2$, in which case the approximations would be optimal.
	We chose the example in the proof for simplicity of exposition.
	However, it is also possible to show the same sharpness results using measures $\mu^1, \dots, \mu^N$ that all have two support points.
	To this end, for $N$ odd and some small $\varepsilon>0$, consider
	\begin{align}
		\mu^1 &\coloneqq \frac 1 2 (\delta(0, -\varepsilon) + \delta(0, \varepsilon)), \\
		\mu^2 = \mu^4 = \dots = \mu^{N-1} &\coloneqq \frac 1 2 (\delta(-1, -\varepsilon) + \delta(1, \varepsilon)) \\
		\mu^3 = \mu^5 = \dots = \mu^N &\coloneqq \frac 1 2 (\delta(-1, \varepsilon) + \delta(1, -\varepsilon)).
	\end{align}
\end{remark}

\section{Numerical Results}\label{sec:numerics}

We present a numerical comparison of different Wasserstein-$2$ barycenter algorithms, the computation of a Wasserstein-$1$ barycenter, and,
as applications, an interpolation between measures and textures, respectively.
To compute the exact two-marginal transport plans of the presented algorithms, we used the \texttt{emd} function of the Python-OT (POT 0.7.0) package \cite{flamary2021pot}, which is a wrapper of the network simplex solver\footnote{\url{https://perso.liris.cnrs.fr/nicolas.bonneel/FastTransport/}} from \cite{BPPH11networksimplex}, which, in turn, is based on an implementation in the LEMON C++ library.\footnote{\url{http://lemon.cs.elte.hu/pub/doc/latest-svn/index.html}}

\subsection{Numerical Comparison}\label{sec:ellipses}

In this section, we compare different Wasserstein-$2$ barycenter algorithms in terms of accuracy and runtime.
We would like to include popular algorithms as iterative Bregman projections into the comparison.
However, many of these algorithms operate in a fixed-support setting, that is, they only optimize over the weights of some a priori chosen support grid.
On the other hand, free-support methods are the ideal candidate for sparse and possibly high-dimensional point cloud data, i.e., if such a grid structure is not present.
An approximation of such data with a coarse grid decreases the accuracy of the solution, but a fine grid increases the runtime of the fixed-support methods.
Hence, the fair choice of a comparison data set is challenging.

We attempt to solve this problem by choosing a grid data set with relatively few nonzero mass weights, that has nevertheless been commonly used as a benchmark example in the literature, also for fixed-support algorithms.
It originates from \cite{CD14fast} and consists of $N=10$ ellipses shown in Figure~\ref{fig:ellipse_data}, given as images of $60\times 60$ pixels.
We take $\lambda\equiv 1/N$.

First, we compute approximate barycenters $\tilde\nu$ using the presented algorithms in the case $p=2$, which we call ``Reference'' and ``Pairwise'' below.\footnote{ \url{https://github.com/jvlindheim/free-support-barycenters}}
Furthermore, we compute the barycenter using publicly available implementations for the methods \cite{JCG20debiased,GWXY19MAAIPM,LSPC2019frankwolfe}, called ``Debiased'', ``IBP'', ``Product'', ``MAAIPM'' and ``Frank--Wolfe'' below,\footnote{ \url{https://github.com/hichamjanati/debiased-ot-barycenters}}
the exact barycenter method from \cite{AB21fixedd} called ``Exact'' below,\footnote{\url{https://github.com/eboix/high_precision_barycenters}} and the method from \cite{LHXCJ20fastIBP} called ``FastIBP'' below.\footnote{ \url{https://github.com/tyDLin/FS-WBP}}
We also tried the BADMM\footnote{ \url{https://github.com/bobye/WBC_Matlab}} method from \cite{YWWL17badmm}, but since it did not converge properly, we do not consider it further.

While the fixed-support methods receive the input measures supported on $\{ 0, \dots, 59/60 \}\times \{ 0, \dots, 59/60 \}$ as gray-valued $60\times 60$ images, the free-support methods get the measures as a list of support positions and corresponding weights.
Clearly, the sparse support of the data is an advantage for the free-support methods.
As a means to facilitate the comparison, we execute the reference and pairwise algorithms also as fixed-support versions.
Instead of computing optimal solutions in Algorithms~\ref{alg:ref} and \ref{alg:pairwise}, we approximate the optimal transport plans $\pi^{ij}$ using the Sinkhorn algorithm on the full grid.
We call these algorithms ``Reference full'' and ``Pairwise full'' below.
Note that, as do the implementations of ``IBP'', ``Debiased'' and ``Product'', we exploit the fact that the Sinkhorn kernel $K=\exp(-c/\varepsilon)$ is separable, such that the corresponding convolution can be performed separately in $x$- and $y$-direction, see, e.g.,~\cite[Rem.~4.17]{PC19book}.
This also reduces memory consumption, since it is not necessary to compute a distance matrix in $\RR^{3600\times 3600}$.
We remark that the runtime of the Sinkhorn algorithms crucially depends on the desired accuracy.
In analogy to ``IBP'', ``Debiased'' and ``Product'' that terminate, once the barycenter measure has a maximum change of $10^{-5}$ in any iteration, we terminate once this tolerance is reached in the first marginal of $\pi^{ij}$.
We check for this criterion only every $10$-th iteration, since it produces computational overhead (contrary to the aforementioned methods).

For all Sinkhorn methods, we used a parameter of $\varepsilon=0.002$ and otherwise chose the default parameters.
For the reference algorithm, we have chosen the reference measure to be the upper left measure shown in Figure~\ref{fig:ellipse_data}.
To compare the runtimes, we executed all codes on the same laptop with Intel i7-8550U CPU and 8GB memory.
The Matlab codes were run in Matlab R2020a.
The runtimes of the Python codes are averages over several runs, as obtained by Python's \texttt{timeit} function.
The results are shown in Figure~\ref{fig:ellipse_barys} and Table~\ref{tab:ellipses}.

\renewcommand\curwidth{\textwidth}
\begin{figure}[htb]
	\centering
	\includegraphics[width=\curwidth]{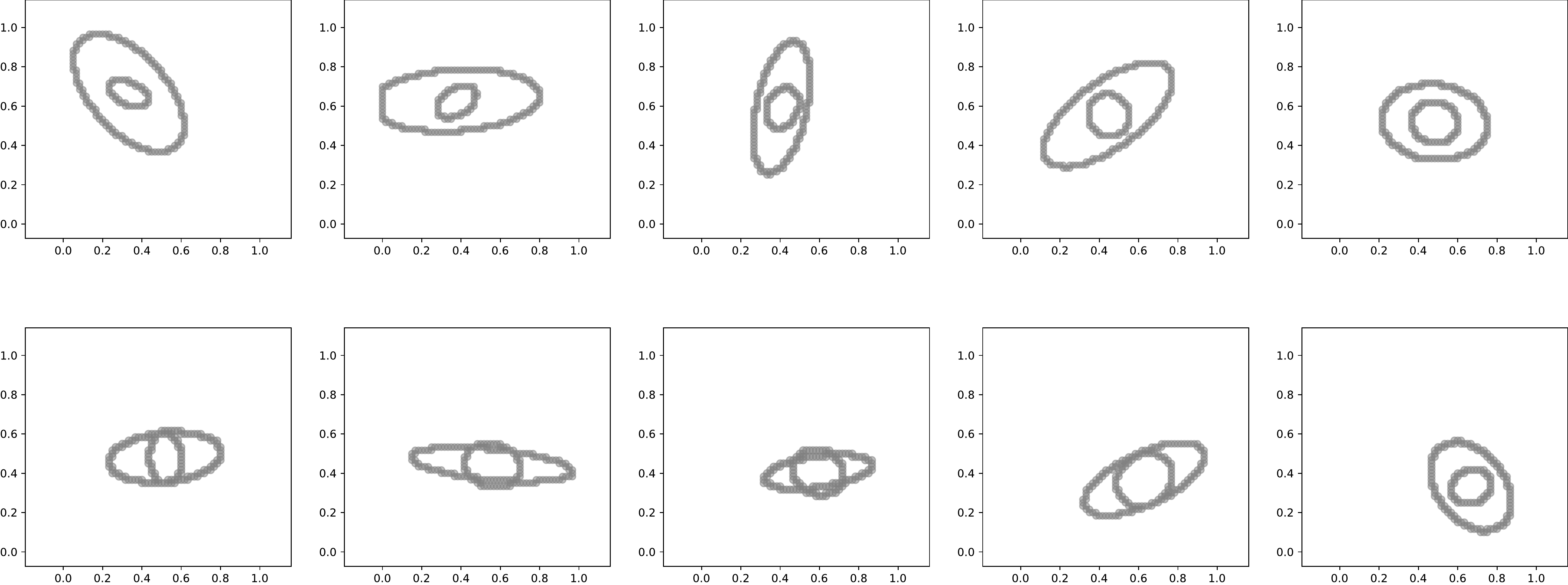}
	\caption{Data set of $10$ nested ellipses. }
	\label{fig:ellipse_data}
\end{figure}
\begin{figure}[htb]
	\centering
	\includegraphics[width=\curwidth]{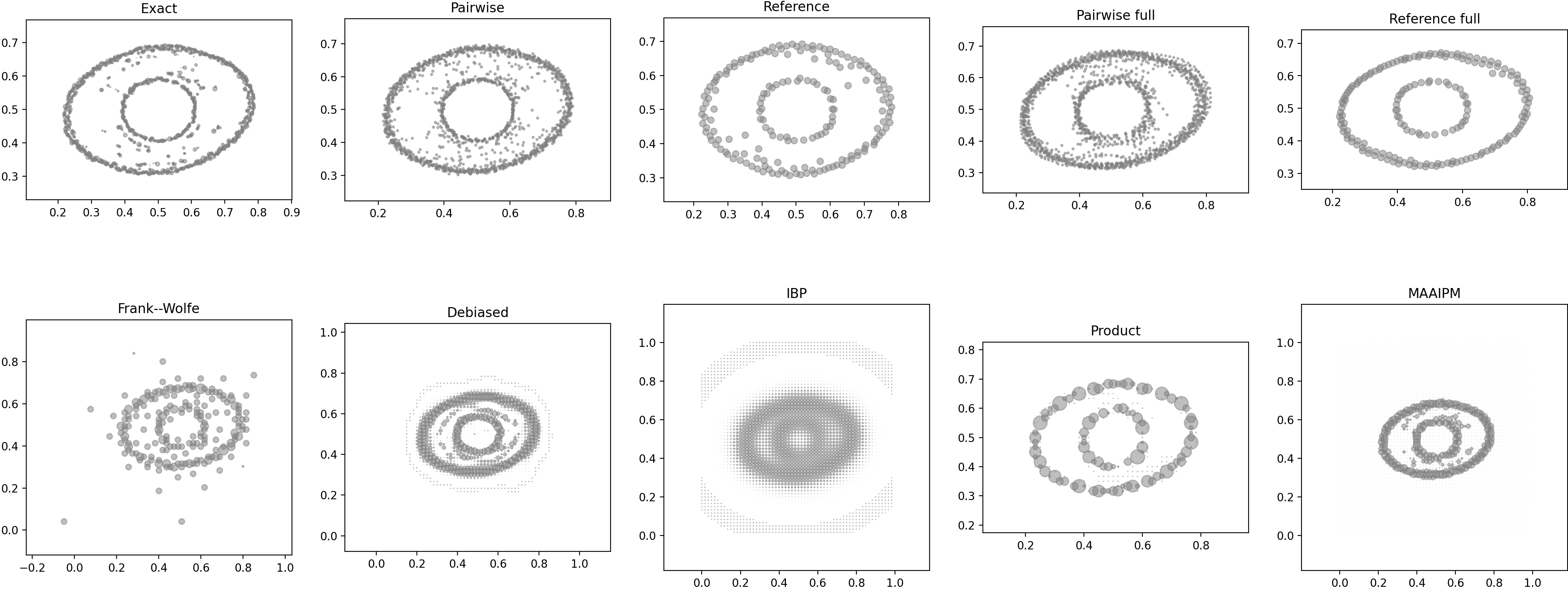}
	\caption{Barycenters for data set in Figure~\ref{fig:ellipse_data} computed by different methods. The weight of a support point is indicated by its area in the plot.}
	\label{fig:ellipse_barys}
\end{figure}

\begin{table}
	\centering
	\begin{tabular}{lrrrrc}
\toprule
{} & $\Psi(\tilde\nu)$ & $\Psi(\tilde\nu)/\Psi(\hat\nu)$ &    runtime & ranking & free support \\
\midrule
Reference      &           0.02683 &                          1.0061 &     \textbf{0.0501} &   \textbf{-1.65} &       \cmark \\
Pairwise       &           0.02669 &                          \textbf{1.0012} &     0.3095 &   -1.41 &       \cmark \\
Pairwise full  &           0.02678 &                          1.0042 &     0.5092 &   -1.14 &       \cmark \\
Debiased       &           0.02675 &                          1.0033 &     1.5061 &   -0.90 &       \xmark \\
Reference full &           0.02716 &                          1.0186 &     0.1128 &   -0.84 &       \cmark \\
IBP            &           0.02723 &                          1.0214 &     0.0914 &   -0.76 &       \xmark \\
Product        &           0.02688 &                          1.0082 &    21.2982 &    0.02 &       \xmark \\
MAAIPM         &           0.02672 &                          1.0020 &   158.5085 &    0.24 &       \xmark \\
Exact          &           0.02666 &                          1.0000 & 18187.6740 &    1.38 &       \cmark \\
FastIBP        &           0.02753 &                          1.0323 &   111.0340 &    1.59 &       \xmark \\
Frank--Wolfe   &           0.02870 &                          1.0763 &    68.2480 &    3.48 &       \cmark \\
\bottomrule
\end{tabular}

	\caption{
		Numerical results for the ellipse barycenter problem.
		The runtime is measured in seconds.
		The ranking is the sum of the standard scores of the logarithm of the relative error and the runtime, respectively. 
		The best values of all approximative algorithms are highlighted in bold.
	}
	\label{tab:ellipses}
\end{table}

While the exact method has a very high runtime, no approximative method achieves a perfect relative error of $\Psi_2(\tilde\nu)/\Psi_2(\hat\nu) = 1$.
However, the error is well below $2$ for all methods, which is a lot better than the worst case bounds shown above.
In fact, using the problem-adapted bounds as outlined in Remark~\ref{rem:adapted_bounds}, without knowledge of $\hat\nu$,
%the reference algorithm already guarantees a relative error of at most $17.1\%$, whereas
the pairwise algorithm already guarantees a relative error of at most $1.64\%$.
Whereas the pairwise algorithm achieves the lowest error of all approximative algorithms with around $0.12\%$, the reference algorithm achieves the lowest runtime of $0.05$ seconds.
Notably, the FastIBP method is a lot slower than IBP whilst producing a more blurry result, which might indicate an implementation issue.
While the Frank--Wolfe method suffers from outliers, the support of most fixed-support methods is more extended than exact barycenter's support, since Sinkhorn-barycenters have dense support.

We attempt to measure the best compromise between low error and runtime by means of the sum of the standard scores of the logarithmic relative errors and runtimes, respectively, where the standard score or zscore is the value normalized by the population mean and standard deviation.
Table~\ref{tab:ellipses} is sorted according to this ranking score.
The reference and pairwise algorithm are the best with respect to this metric.
As expected, the full-support versions of the reference and pairwise algorithms have worse runtime and also accuracy, which can likely be explained by the errors of the Sinkhorn algorithm.
Nevertheless, they offer a competitive tradeoff between speed and accuracy with respect to the other methods, which shows that the advantage of the framework considered in this paper is not only due to the sparse support of the chosen data set.
Altogether, the results of the proposed algorithms look promising.

\subsection{Wasserstein-$1$ Barycenters}\label{sec:w1_numerics}

Next, we compute approximate Wasserstein-$1$ barycenters of the same data set as in the previous Section~\ref{sec:ellipses} using the Algorithms~\ref{alg:ref1} and \ref{alg:pairwise1}.
The results are depicted in Figure~\ref{fig:ellipse_barys1} in the top row.

Note that the elliptic structure of the barycenter is only retained to some degree, which can probably be explained by the choice of $c_1$ as the cost function.
For example, it is easy to show that the OT plans corresponding to $\W_2^2$ are translation equivariant.
On the other hand, this property fails for any other $p\in [1, 2) \cup (2, \infty]$, as it is easy to derive from the example with $\mu, \nu\in \mathcal P(\RR^2)$ defined by
\begin{equation}
	\mu\coloneqq \frac 1 2 (\delta(0, 0) + \delta(1, 0)), \qquad
	\nu\coloneqq \frac 1 2 (\delta(0, 0) + \delta(0, 1)).
\end{equation}
Thus, we also execute algorithms Algorithms~\ref{alg:ref1} and \ref{alg:pairwise1}, where we swap $c_1$ for the squared Euclidean costs $c_2$ in order to compute the OT plans $\pi^{ij}\in \Pi(\mu^i, \mu^j)$, but continue to compute the barycenter support using Weiszfeld's algorithm.
The results are shown in Figure~\ref{fig:ellipse_barys1} in the bottom row.

\begin{figure}[htb]
	\centering
	\includegraphics[width=.7\curwidth]{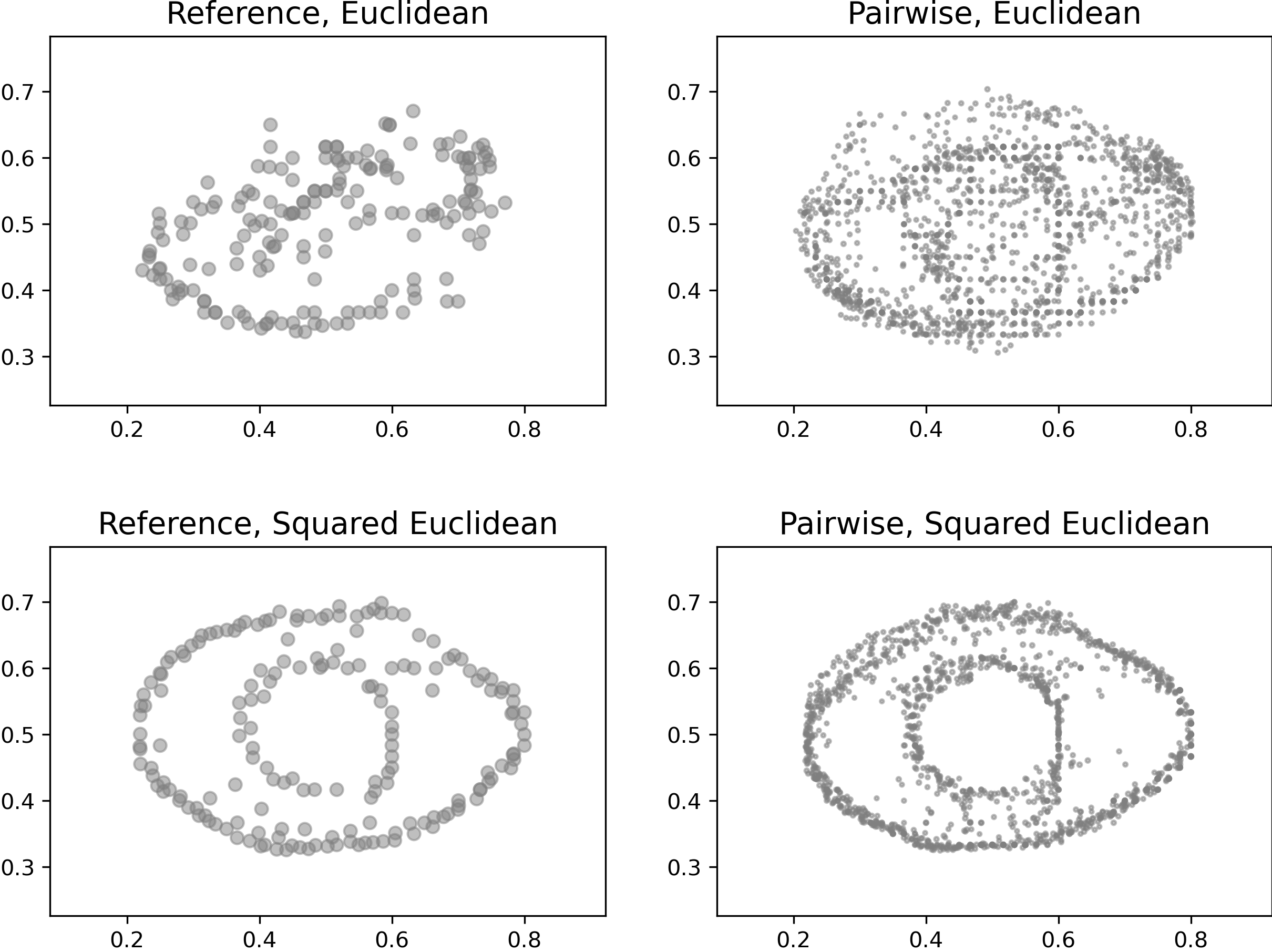}
	\caption{Barycenters computed with Algorithms~\ref{alg:ref1} and \ref{alg:pairwise1} for the data set in Figure~\ref{fig:ellipse_data} and cost functions $c_1(x, y)=\Vert x - y\Vert$ and $c_2(x, y)=\Vert x - y\Vert^2$. The weight of a support point is indicated by its area in the plot.}
	\label{fig:ellipse_barys1}
\end{figure}

Now the elliptic structure is preserved a lot better and the results are very similar to the Wasserstein-$2$ barycenters.
We conclude that the choice of cost function had a larger impact on the results than whether the barycenter support is constructed using the means or geometric medians.
Algorithms~\ref{alg:ref1} and \ref{alg:pairwise1} with $c_2$ thus seem like an interesting alternative to Algorithms~\ref{alg:ref} and \ref{alg:pairwise} in the case where one expects outlier measures, since the median is more robust to outliers than the mean, see, e.g. \cite{breakdown91LR}.

\subsection{Multiple Different Sets of Weights}\label{sec:multiple_weights}

For this numerical application, we compute barycenters between four given measures for multiple sets of weights $\lambda^k=(\lambda_1^k, \dots, \lambda_N^k)$, $\lambda^k\in \Delta_4$, $k=1, \dots, K$, obtaining an interpolation between those measures.
An advantage of the presented algorithms for that application is that the optimal transport plans between the input measures, which are the bottleneck computations, only need to be performed once, whereas the matrix multiplications for interpolations with new weights are fast.
We use the proposed algorithms for a data set of four measures given as images of size $50\times 50$, for sets of weights that bilinearly interpolate between the four unit vectors.
The original measures are shown in the four corners of Figure~\ref{fig:four_images_pairwise}.
For the reference algorithm, we use the upper left measure as the reference measure.
The results are shown in Figures~\ref{fig:four_images_ref} and~\ref{fig:four_images_pairwise}.
\begin{figure}[htb]
	\centering
	\includegraphics[width=\curwidth]{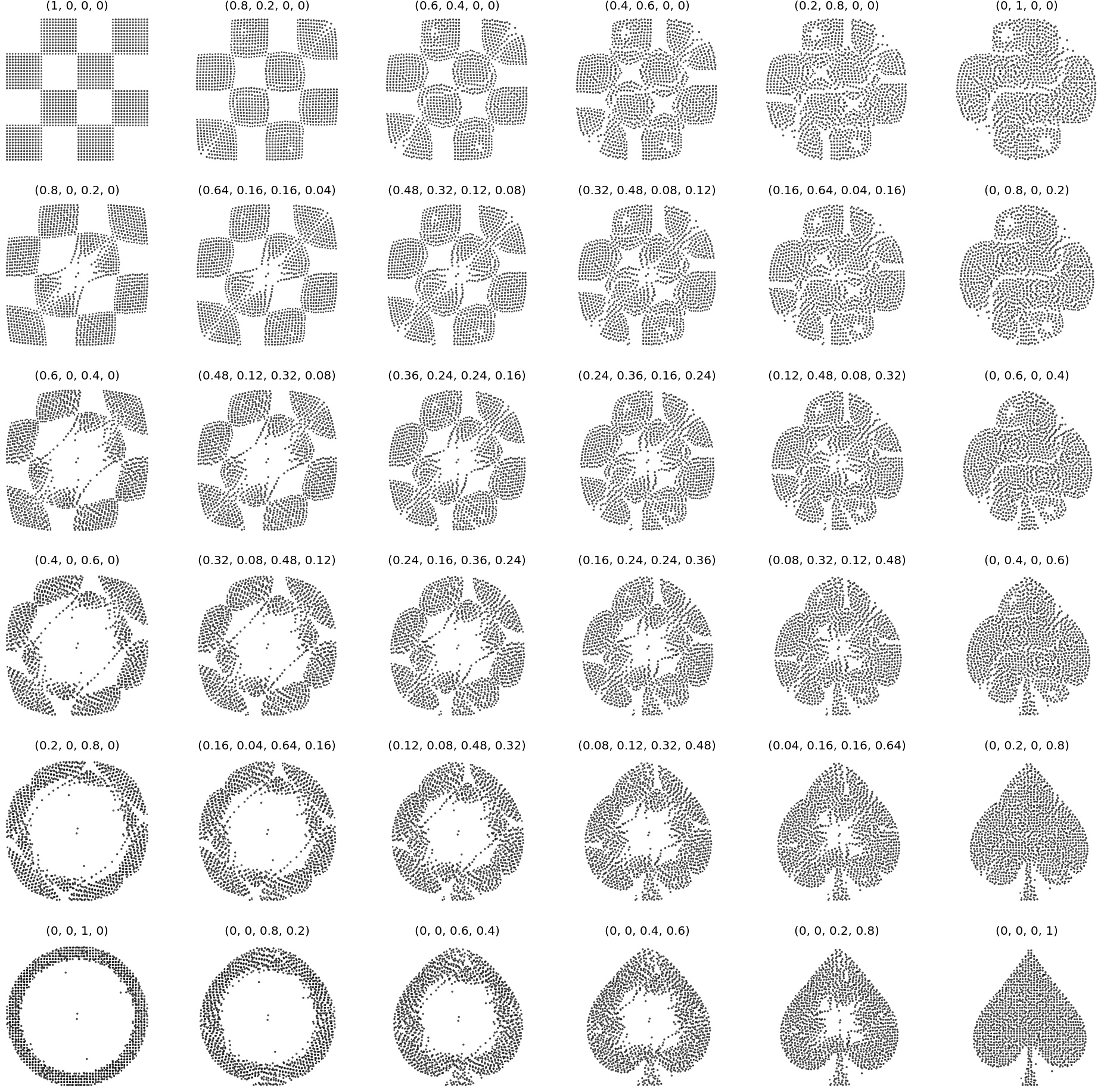}
	\caption{Approximate barycenters for different sets of weights computed by Algorithm~\ref{alg:ref}.}
	\label{fig:four_images_ref}
\end{figure}
\begin{figure}[htb]
	\centering
	\includegraphics[width=\curwidth]{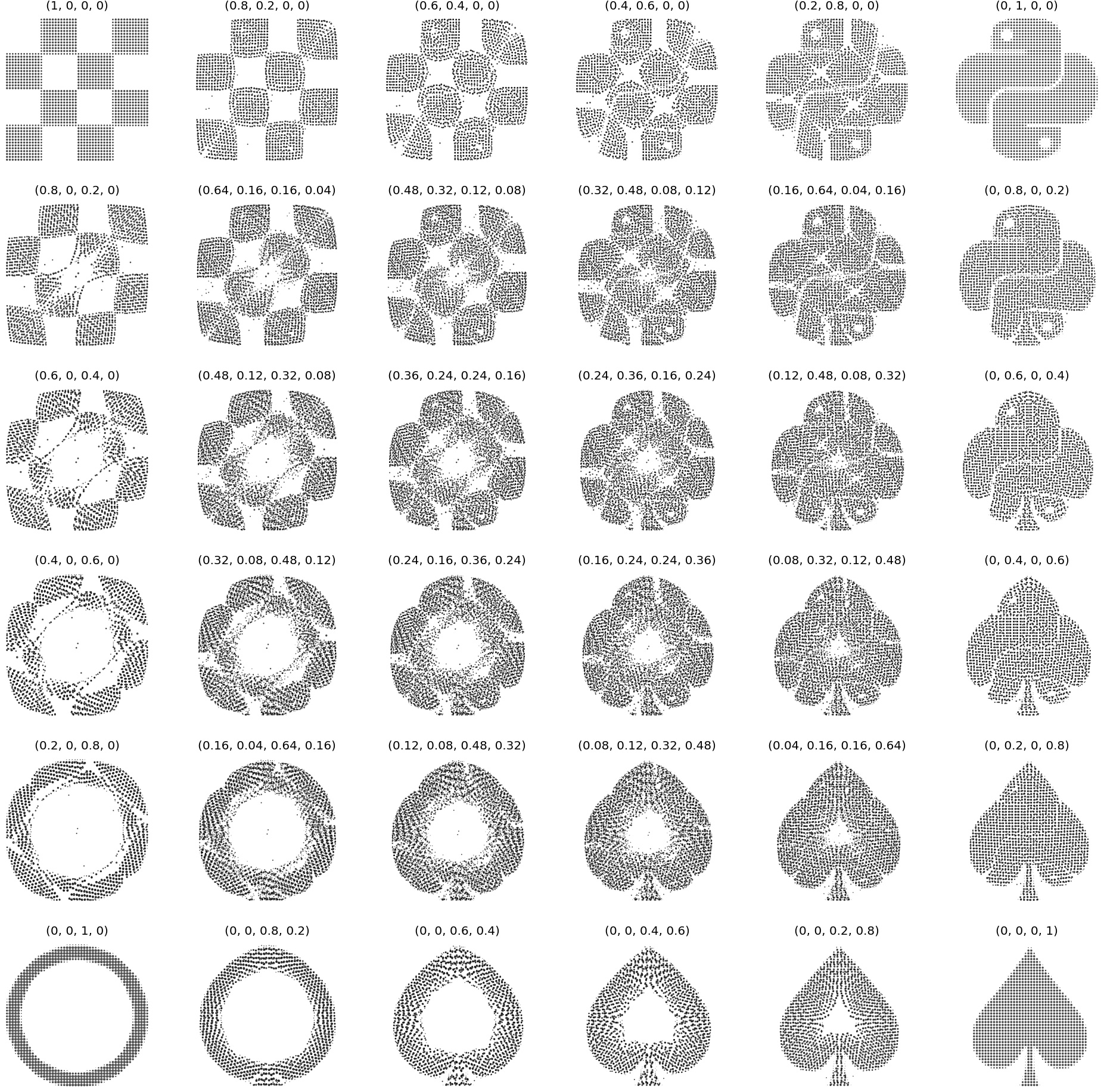}
	\caption{Approximate barycenters for different sets of weights computed by Algorithm~\ref{alg:pairwise}.}
	\label{fig:four_images_pairwise}
\end{figure}

While the running time of the reference algorithm is shorter, its solution has several artifacts, in particular when the weight $\lambda^k_1$ of the reference measure is low.
On the other hand, through effectively averaging the reference algorithm for different choices of the reference measure, the pairwise algorithm is able smooth out some of these artifacts.
We compare the results of both algorithms for $\lambda=(0.04, 0.16, 0.16, 0.64)$ in Figure~\ref{fig:four_images_comparison}.
We also computed the upper error bound $\eta$ of the pairwise algorithm given by \eqref{eq:adapted_bound_2} exemplarily for uniform weights, which is $3.6\%$.

\renewcommand\curwidth{.3\textwidth}
\begin{figure}[htb]
	\centering
	\begin{subfigure}{\curwidth}
		\centering
		\includegraphics[width=\textwidth]{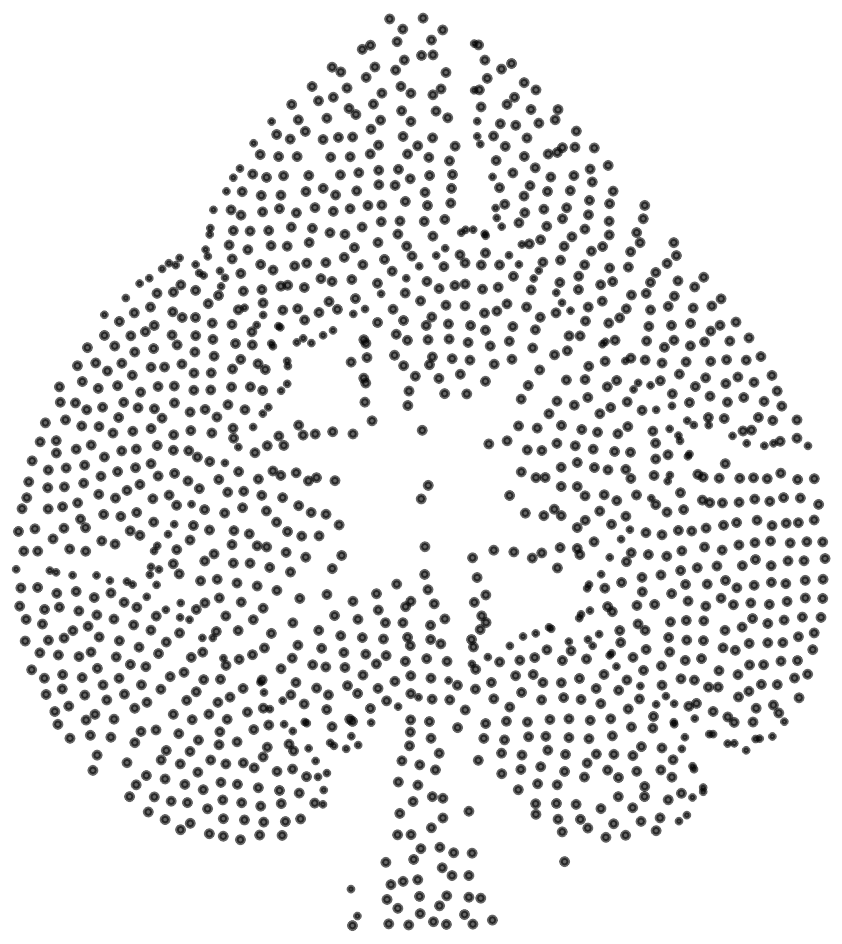}
		\caption{Algorithm~\ref{alg:ref} (reference).}
	\end{subfigure}
	\hspace*{2cm}	
	\begin{subfigure}{\curwidth}
		\centering
		\includegraphics[width=\textwidth]{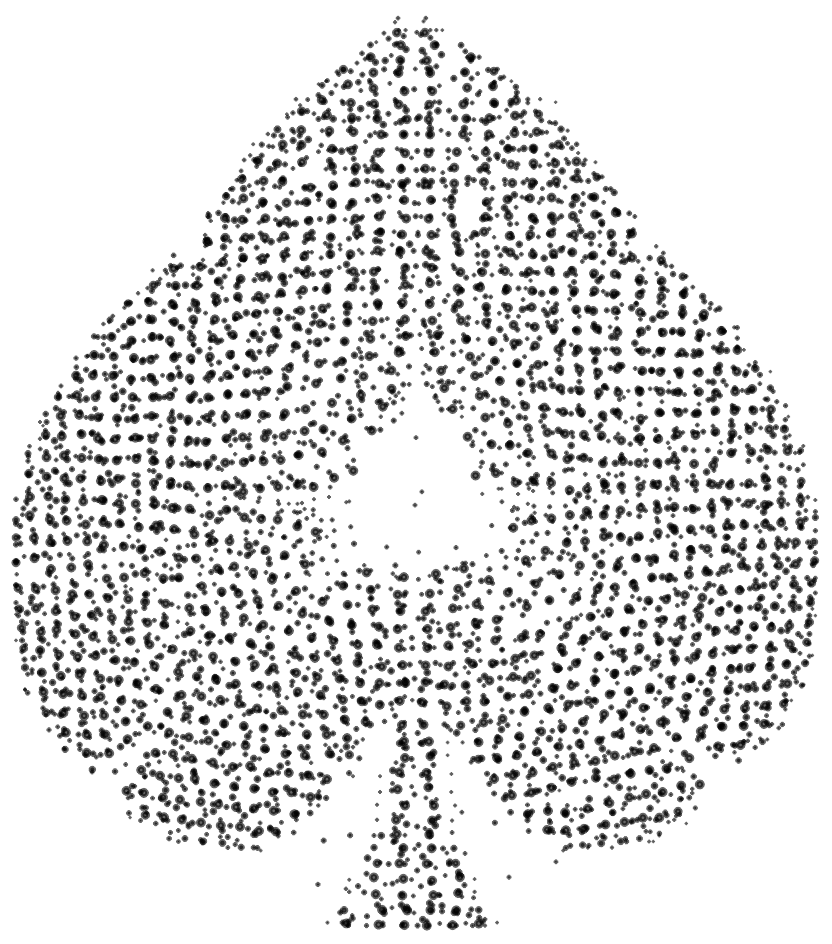}
		\caption{Algorithm~\ref{alg:pairwise} (pairwise).}
	\end{subfigure}
	\caption{Close comparison of two approximate barycenters of the reference and pairwise algorithms for the weights $\lambda=(0.04, 0.16, 0.16, 0.64)$.}
	\label{fig:four_images_comparison}
\end{figure}

\subsection{Texture Interpolation}

For another application, we lift the experiment of Section~\ref{sec:multiple_weights} from interpolation of measures in Euclidean space to interpolation of textures via the synthesis method from \cite{HLPR21gotex}, using their publicly available source code.\footnote{\url{https://github.com/ahoudard/wgenpatex}}
While the authors already interpolated between two different textures in that paper, requiring only the solution of a two-marginal optimal transport problem to obtain a barycenter, we can do this for multiple textures using approximate barycenters for multiple measures.
Briefly, the authors proposed to encode a texture as a collection of smaller patches $F_j$, where each, say, $4\times 4$-patch is encoded as a point $x_j\in \RR^{16}$.
The texture is then modeled as a ``feature measure'' $\frac 1 M \sum_{j=1}^M\delta(x_j)\in \mathcal P(\RR^{16})$,
%introducing an invariance of image patches with respect to their position in the image.
such that this description is invariant under different positions of its patches within the image.
Finally, this is repeated for image patches at several scales $s$, obtaining a collection of measures $(\mu^s)$, $s=1, \dots, S$.
Synthesizing an image is done by optimizing an optimal transport loss between its feature measure and some reference measure (and then summing over $s$), as obtained, e.g., from a reference image.
Thus, the synthesized image tries to imitate the reference image in terms of its feature measures.
Here, we choose four texture images of size $256\times 256$ from the ``Describable Textures Dataset'' \cite{CMKMV14textures}.
We compute their feature measures $\mu^{1, s}, \dots, \mu^{4, s}$ for each scale.
Next, as in Section~\ref{sec:multiple_weights}, we compute approximate barycenters $\tilde\nu^{k, s}$ for all $k$ and $s$ using the reference algorithm, where $k$ runs over different sets of weights, and perform the image synthesis for each $k$ using the $\tilde\nu^{k, s}$ as feature measures to imitate.
The results are shown in Figure~\ref{fig:textures}.
Using this approach, one obtains a visually pleasing interpolation between the four given textures.

\begin{figure}[htb]
	\centering
	\renewcommand\curwidth{.9\textwidth}
	\includegraphics[width=\curwidth]{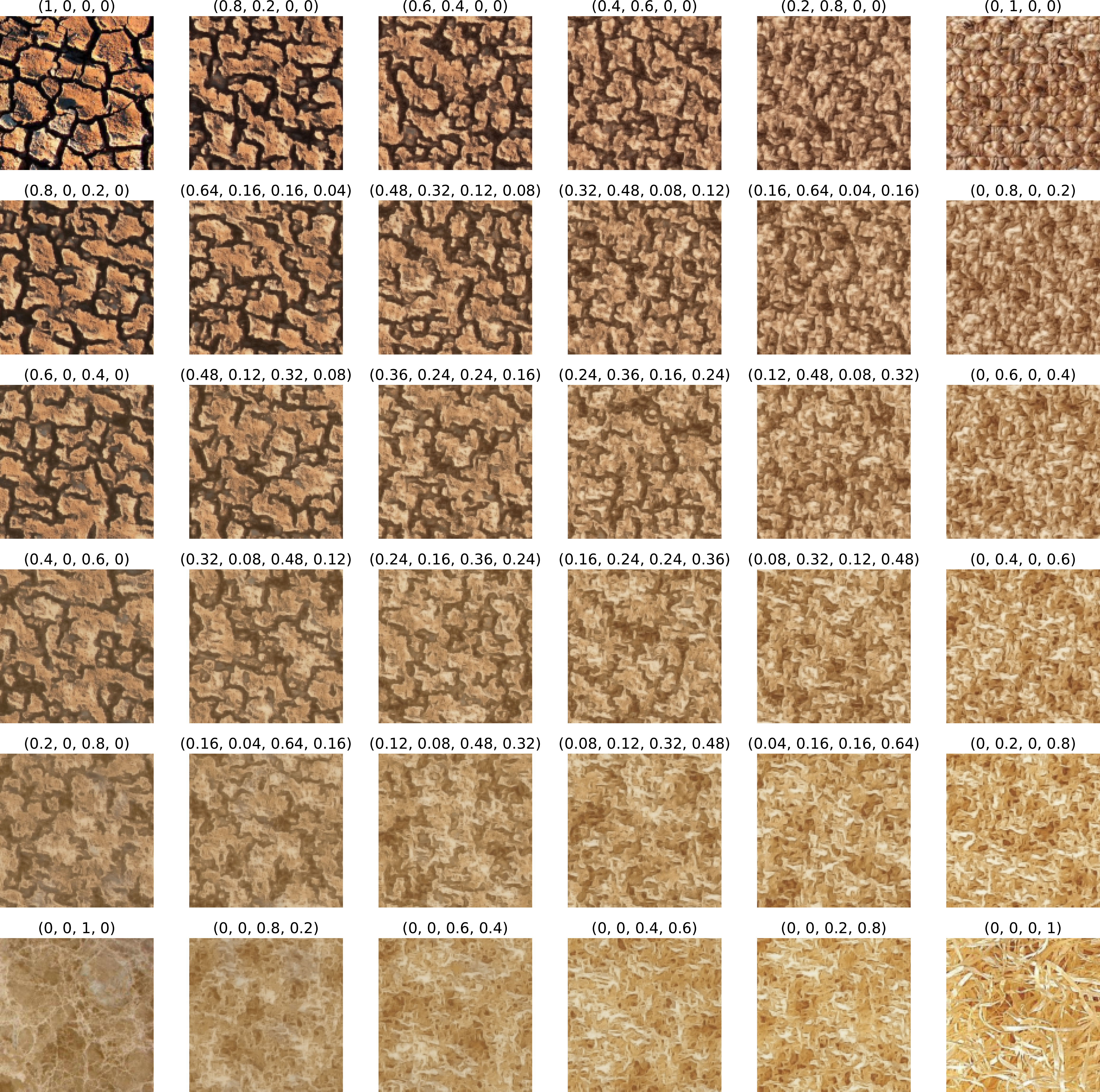}
	\caption{Interpolation of four different textures that are displayed in the four corners. The weight set for the barycenter computations performed for each image is shown above each synthesized image.}
	\label{fig:textures}
\end{figure}

\section{Conclusion}\label{sec:discussion}
In this paper, we derived two straightforward algorithms from a well-known framework for Wasserstein-$p$ barycenters for $p=1, 2$.
We analyzed them theoretically and practically, showing that they are easy to implement, produce sparse solutions and are thus memory-efficient.
We validated their speed and precision using numerical examples.

In the future, it would be interesting to generalize the discussed algorithms and bounds to other $p\geq 1$.
For instance, for $p=\infty$, the barycentric map $\mean$ corresponds to the solution of the so-called smallest-sphere-problem, which can be solved by Welzl's algorithm \cite{welzl91W}.
Finding a lower bound as in Proposition~\ref{prop:lower_bound_nuhat} for general $p\geq 1$ is not straightforward, since the proofs of \eqref{eq:star_vs_knn} and \eqref{eq:W1_estimate} are specific to $p=2$ and $p=1$.

\subsubsection*{Acknowledgements}
Many thanks to Gabriele Steidl and Florian Beier for fruitful discussions.

\subsubsection*{Declarations}
%\textbf{Conflict of interest:}
The authors declare no conflict of interest.

\subsubsection*{Data Availability Statement}
The datasets generated during and/or analysed during the current study are available in the GitHub repositories \url{https://github.com/jvlindheim/free-support-barycenters} and \url{https://www.robots.ox.ac.uk/~vgg/data/dtd/}.

%{
%\small\noindent
%\textbf{Data Availability Statement}\:
%The datasets generated during and/or analysed during the current study are available in the GitHub repositories \url{https://github.com/jvlindheim/free-support-barycenters} and \url{https://www.robots.ox.ac.uk/~vgg/data/dtd/}.
%}

%------------------------------------------------------------------
\bibliographystyle{abbrv}
\bibliography{literature}

\begin{thebibliography}{10}

\bibitem{AC11barycenters}
M.~Agueh and G.~Carlier.
\newblock Barycenters in the {W}asserstein space.
\newblock {\em SIAM J. Math. Anal.}, 43(2):904--924, 2011.

\bibitem{AB21fixedd}
J.~M. Altschuler and E.~Boix-Adsera.
\newblock {W}asserstein barycenters can be computed in polynomial time in fixed
  dimension.
\newblock {\em J. Mach. Learn. Res.}, 22(44):1--19, 2021.

\bibitem{AB21nphard}
J.~M. Altschuler and E.~Boix-Adser\`a.
\newblock {W}asserstein barycenters are {NP}-hard to compute.
\newblock {\em SIAM J. Math. Data Sci.}, 4(1):179--203, 2022.

\bibitem{ABC16fixedpoint}
P.~C. \'{A}lvarez Esteban, E.~del Barrio, J.~A. Cuesta-Albertos, and
  C.~Matr\'{a}n.
\newblock A fixed-point approach to barycenters in {W}asserstein space.
\newblock {\em J. Math. Anal. Appl.}, 441(2):744--762, 2016.

\bibitem{gradflows08AGS}
L.~Ambrosio, N.~Gigli, and G.~Savar\'{e}.
\newblock {\em Gradient flows in metric spaces and in the space of probability
  measures}.
\newblock Lectures in Mathematics ETH Z\"{u}rich. Birkh\"{a}user Verlag, Basel,
  Basel, CH, second edition, 2008.

\bibitem{ABM16brute}
E.~Anderes, S.~Borgwardt, and J.~Miller.
\newblock Discrete {W}asserstein barycenters: optimal transport for discrete
  data.
\newblock {\em Math. Methods Oper. Res.}, 84(2):389--409, 2016.

\bibitem{nomedianalg88B}
C.~Bajaj.
\newblock The algebraic degree of geometric optimization problems.
\newblock {\em Discrete Comput. Geom.}, 3(2):177--191, 1988.

\bibitem{weiszfeld15BS}
A.~Beck and S.~Sabach.
\newblock {W}eiszfeld's method: old and new results.
\newblock {\em J. Optim. Theory Appl.}, 164(1):1--40, 2015.

\bibitem{GWLOT21BBS}
F.~Beier, R.~Beinert, and G.~Steidl.
\newblock On a linear {G}romov--{W}asserstein distance.
\newblock {\em IEEE Trans. Image Process.}, 31:7292--7305, 2022.

\bibitem{UMOT21BLNS}
F.~Beier, J.~von Lindheim, S.~Neumayer, and G.~Steidl.
\newblock Unbalanced multi-marginal optimal transport.
\newblock {\em J. Math. Imaging Vis.}, 2022.

\bibitem{BCC15IBP}
J.-D. Benamou, G.~Carlier, M.~Cuturi, L.~Nenna, and G.~Peyr\'{e}.
\newblock Iterative {B}regman projections for regularized transportation
  problems.
\newblock {\em SIAM J. Sci. Comput.}, 37(2):A1111--A1138, 2015.

\bibitem{flows19BCN}
J.-D. Benamou, G.~Carlier, and L.~Nenna.
\newblock Generalized incompressible flows, multi-marginal transport and
  {S}inkhorn algorithm.
\newblock {\em Numer. Math.}, 142(1):33--54, 2019.

\bibitem{avgOTmaps18BK}
J.~Bigot and T.~Klein.
\newblock Characterization of barycenters in the {W}asserstein space by
  averaging optimal transport maps.
\newblock {\em ESAIM Probab. Stat.}, 22:35--57, 2018.

\bibitem{BPPH11networksimplex}
N.~Bonneel, M.~van~de Panne, S.~Paris, and W.~Heidrich.
\newblock Displacement interpolation using {L}agrangian mass transport.
\newblock In {\em Proceedings of the 2011 SIGGRAPH Asia Conference}, SA '11,
  New York, NY, USA, 2011. Association for Computing Machinery.

\bibitem{B20LPapprox}
S.~Borgwardt.
\newblock An lp-based, strongly-polynomial 2-approximation algorithm for sparse
  {W}asserstein barycenters.
\newblock {\em Oper. Res.}, pages 1--41, 2020.

\bibitem{improvedLP20BP}
S.~Borgwardt and S.~Patterson.
\newblock Improved linear programs for discrete barycenters.
\newblock {\em INFORMS J. Optim.}, 2(1):14--33, 2020.

\bibitem{columngen22BP}
S.~Borgwardt and S.~Patterson.
\newblock A column generation approach to the discrete barycenter problem.
\newblock {\em Discrete Optim.}, 43:Paper No. 100674, 16, 2022.

\bibitem{HK-LOT22CCST}
T.~Cai, J.~Cheng, B.~Schmitzer, and M.~Thorpe.
\newblock The linearized {H}ellinger--{K}antorovich distance.
\newblock {\em SIAM J. Imaging Sci.}, 15(1):45--83, 2022.

\bibitem{CE10teams}
G.~Carlier and I.~Ekeland.
\newblock Matching for teams.
\newblock {\em Econom. Theory}, 42(2):397--418, 2010.

\bibitem{weakOT21CTF}
E.~Cazelles, F.~Tobar, and J.~Fontbona.
\newblock A novel notion of barycenter for probability distributions based on
  optimal weak mass transport.
\newblock In M.~Ranzato, A.~Beygelzimer, Y.~Dauphin, P.~Liang, and J.~W.
  Vaughan, editors, {\em Advances in Neural Information Processing Systems},
  volume~34, pages 13575--13586, virtual, 2021. Curran Associates, Inc.

\bibitem{CMKMV14textures}
M.~Cimpoi, S.~Maji, I.~Kokkinos, S.~Mohamed, , and A.~Vedaldi.
\newblock Describing textures in the wild.
\newblock In {\em Proceedings of the {IEEE} Conf. on Computer Vision and
  Pattern Recognition ({CVPR})}, 2014.

\bibitem{stoch18CCS}
S.~Claici, E.~Chien, and J.~Solomon.
\newblock Stochastic {W}asserstein barycenters.
\newblock In J.~Dy and A.~Krause, editors, {\em Proceedings of the 35th
  International Conference on Machine Learning}, volume~80 of {\em Proceedings
  of Machine Learning Research}, pages 999--1008, Stockholmsm\"assan,
  Stockholm, SE, 10--15 Jul 2018. PMLR.

\bibitem{CD14fast}
M.~Cuturi and A.~Doucet.
\newblock Fast computation of {W}asserstein barycenters.
\newblock In {\em International conference on machine learning}, pages
  685--693. PMLR, 2014.

\bibitem{DT21complexitybounds}
D.~Dvinskikh and D.~Tiapkin.
\newblock Improved complexity bounds in {W}asserstein barycenter problem.
\newblock In {\em International Conference on Artificial Intelligence and
  Statistics}, pages 1738--1746. PMLR, 2021.

\bibitem{sensors20H}
F.~Elvander, I.~Haasler, A.~Jakobsson, and J.~Karlsson.
\newblock Multi-marginal optimal transport using partial information with
  applications in robust localization and sensor fusion.
\newblock {\em Signal Process.}, 171:107474, 2020.

\bibitem{flamary2021pot}
R.~Flamary, N.~Courty, A.~Gramfort, M.~Z. Alaya, A.~Boisbunon, S.~Chambon,
  L.~Chapel, A.~Corenflos, K.~Fatras, N.~Fournier, L.~Gautheron, N.~T. Gayraud,
  H.~Janati, A.~Rakotomamonjy, I.~Redko, A.~Rolet, A.~Schutz, V.~Seguy, D.~J.
  Sutherland, R.~Tavenard, A.~Tong, and T.~Vayer.
\newblock Pot: Python optimal transport.
\newblock {\em J. Mach. Learn. Res.}, 22(78):1--8, 2021.

\bibitem{sparsity22FP}
G.~Friesecke and M.~Penka.
\newblock The {GenCol} algorithm for high-dimensional optimal transport:
  general formulation and application to barycenters and {W}asserstein splines.
\newblock {\em arXiv preprint arXiv:2209.09081}, 2022.

\bibitem{GS98}
W.~Gangbo and A.~\'{S}wi\c{e}ch.
\newblock Optimal maps for the multidimensional {M}onge--{K}antorovich problem.
\newblock {\em Comm. Pure Appl. Math.}, 51(1):23--45, 1998.

\bibitem{GWXY19MAAIPM}
D.~Ge, H.~Wang, Z.~Xiong, and Y.~Ye.
\newblock Interior-point methods strike back: {S}olving the {W}asserstein
  barycenter problem.
\newblock In H.~Wallach, H.~Larochelle, A.~Beygelzimer, F.~d'~Alch\'{e}-Buc,
  E.~Fox, and R.~Garnett, editors, {\em Advances in Neural Information
  Processing Systems}, volume~32, Vancouver Convention Center, Vancouver, CA,
  2019. Curran Associates, Inc.

\bibitem{tree21HRCK}
I.~Haasler, A.~Ringh, Y.~Chen, and J.~Karlsson.
\newblock Multimarginal optimal transport with a tree-structured cost and the
  {S}chr\"{o}dinger bridge problem.
\newblock {\em SIAM J. Control Optim.}, 59(4):2428--2453, 2021.

\bibitem{HMZ20randomized}
F.~Heinemann, A.~Munk, and Y.~Zemel.
\newblock Randomized {W}asserstein barycenter computation: resampling with
  statistical guarantees.
\newblock {\em SIAM J. Math. Data Sci.}, 4(1):229--259, 2022.

\bibitem{HLPR21gotex}
A.~Houdard, A.~Leclaire, N.~Papadakis, and J.~Rabin.
\newblock A generative model for texture synthesis based on optimal transport
  between feature distributions.
\newblock {\em J. Math. Imaging Vis.}, Jun 2022.

\bibitem{ISS21dimreduction}
Z.~Izzo, S.~Silwal, and S.~Zhou.
\newblock Dimensionality reduction for {W}asserstein barycenter.
\newblock In A.~Beygelzimer, Y.~Dauphin, P.~Liang, and J.~W. Vaughan, editors,
  {\em Advances in Neural Information Processing Systems}, 2021.

\bibitem{JCG20debiased}
H.~Janati, M.~Cuturi, and A.~Gramfort.
\newblock Debiased {S}inkhorn barycenters.
\newblock In H.~D. III and A.~Singh, editors, {\em Proceedings of the 37th
  International Conference on Machine Learning}, volume 119 of {\em Proceedings
  of Machine Learning Research}, pages 4692--4701, virtual, 13--18 Jul 2020.
  PMLR.

\bibitem{complexitybarycenters19KTDDGU}
A.~Kroshnin, N.~Tupitsa, D.~Dvinskikh, P.~Dvurechensky, A.~Gasnikov, and
  C.~Uribe.
\newblock On the complexity of approximating {W}asserstein barycenters.
\newblock In K.~Chaudhuri and R.~Salakhutdinov, editors, {\em Proceedings of
  the 36th International Conference on Machine Learning}, volume~97 of {\em
  Proceedings of Machine Learning Research}, pages 3530--3540, Long Beach,
  California, USA, 09--15 Jun 2019. PMLR.

\bibitem{cont20LGYS}
L.~Li, A.~Genevay, M.~Yurochkin, and J.~M. Solomon.
\newblock Continuous regularized {W}asserstein barycenters.
\newblock In H.~Larochelle, M.~Ranzato, R.~Hadsell, M.~F. Balcan, and H.~Lin,
  editors, {\em Advances in Neural Information Processing Systems}, volume~33,
  pages 17755--17765, virtual, 2020. Curran Associates, Inc.

\bibitem{LHXCJ20fastIBP}
T.~Lin, N.~Ho, X.~Chen, M.~Cuturi, and M.~Jordan.
\newblock Fixed-support {W}asserstein barycenters: Computational hardness and
  fast algorithm.
\newblock In H.~Larochelle, M.~Ranzato, R.~Hadsell, M.~F. Balcan, and H.~Lin,
  editors, {\em Advances in Neural Information Processing Systems}, volume~33,
  pages 5368--5380, virtual, 2020. Curran Associates, Inc.

\bibitem{lin2019complexity}
T.~Lin, N.~Ho, M.~Cuturi, and M.~I. Jordan.
\newblock On the complexity of approximating multimarginal optimal transport.
\newblock {\em J. Mach. Learn. Res.}, 23(65):1--43, 2022.

\bibitem{breakdown91LR}
H.~P. Lopuha\"{a} and P.~J. Rousseeuw.
\newblock Breakdown points of affine equivariant estimators of multivariate
  location and covariance matrices.
\newblock {\em Ann. Statist.}, 19(1):229--248, 1991.

\bibitem{LSPC2019frankwolfe}
G.~Luise, S.~Salzo, M.~Pontil, and C.~Ciliberto.
\newblock Sinkhorn barycenters with free support via {F}rank--{W}olfe
  algorithm.
\newblock In H.~Wallach, H.~Larochelle, A.~Beygelzimer, F.~d'~Alch\'{e}-Buc,
  E.~Fox, and R.~Garnett, editors, {\em Advances in Neural Information
  Processing Systems}, volume~32, pages 9322--9333, Vancouver Convention
  Center, Vancouver, CA, 2019. Curran Associates, Inc.

\bibitem{LOT20MDC}
Q.~M\'erigot, A.~Delalande, and F.~Chazal.
\newblock Quantitative stability of optimal transport maps and linearization of
  the 2-{W}asserstein space.
\newblock In S.~Chiappa and R.~Calandra, editors, {\em Proceedings of the
  Twenty Third International Conference on Artificial Intelligence and
  Statistics}, volume 108 of {\em Proceedings of Machine Learning Research},
  pages 3186--3196, virtual, 2020. PMLR.

\bibitem{LOT21MC}
C.~Moosmüller and A.~Cloninger.
\newblock Linear optimal transport embedding: provable {W}asserstein
  classification for certain rigid transformations and perturbations.
\newblock {\em Inf. Inference}, 09 2022.

\bibitem{stataspects19PZ}
V.~M. Panaretos and Y.~Zemel.
\newblock Statistical aspects of {W}asserstein distances.
\newblock {\em Annu. Rev. Stat. Appl.}, 6:405--431, 2019.

\bibitem{PC19book}
G.~Peyr{\'e}, M.~Cuturi, et~al.
\newblock Computational optimal transport: With applications to data science.
\newblock {\em Found. Trends Mach. Learn.}, 11(5-6):355--607, 2019.

\bibitem{PRV20oncomputation}
G.~Puccetti, L.~R\"{u}schendorf, and S.~Vanduffel.
\newblock On the computation of {W}asserstein barycenters.
\newblock {\em J. Multivariate Anal.}, 176:104581, 16, 2020.

\bibitem{iPAM21QP}
Y.~Qian and S.~Pan.
\newblock An inexact {PAM} method for computing {W}asserstein barycenter with
  unknown supports.
\newblock {\em Comput. Appl. Math.}, 40(2):Paper No. 45, 29, 2021.

\bibitem{texturemix11}
J.~Rabin, G.~Peyr{\'e}, J.~Delon, and M.~Bernot.
\newblock Wasserstein barycenter and its application to texture mixing.
\newblock In {\em International Conference on Scale Space and Variational
  Methods in Computer Vision}, pages 435--446. Springer, 2011.

\bibitem{S15otapplied}
F.~Santambrogio.
\newblock {\em Optimal transport for applied mathematicians}, volume~87 of {\em
  Progress in Nonlinear Differential Equations and their Applications}.
\newblock Birkh\"{a}user/Springer, Cham, Cham, CH, 2015.
\newblock Calculus of variations, PDEs, and modeling.

\bibitem{convolutional15SPCetal}
J.~Solomon, F.~de~Goes, G.~Peyr\'{e}, M.~Cuturi, A.~Butscher, A.~Nguyen, T.~Du,
  and L.~Guibas.
\newblock Convolutional {W}asserstein distances: Efficient optimal
  transportation on geometric domains.
\newblock {\em ACM Trans. Graph.}, 34(4), jul 2015.

\bibitem{bayes18}
S.~Srivastava, C.~Li, and D.~B. Dunson.
\newblock Scalable {B}ayes via barycenter in {W}asserstein space.
\newblock {\em J. Mach. Learn. Res.}, 19:Paper No. 8, 35, 2018.

\bibitem{TSKRY21treesliced}
Y.~Takezawa, R.~Sato, Z.~Kozareva, S.~Ravi, and M.~Yamada.
\newblock Fixed support tree-sliced {W}asserstein barycenter.
\newblock In G.~Camps-Valls, F.~J.~R. Ruiz, and I.~Valera, editors, {\em
  Proceedings of The 25th International Conference on Artificial Intelligence
  and Statistics}, volume 151 of {\em Proceedings of Machine Learning
  Research}, pages 1120--1137, virtual, 28--30 Mar 2022. PMLR.

\bibitem{frechettemplates05T}
A.~Trouv\'{e} and L.~Younes.
\newblock Local geometry of deformable templates.
\newblock {\em SIAM J. Math. Anal.}, 37(1):17--59, 2005.

\bibitem{frechetpersistence14}
K.~Turner, Y.~Mileyko, S.~Mukherjee, and J.~Harer.
\newblock Fr\'{e}chet means for distributions of persistence diagrams.
\newblock {\em Discrete Comput. Geom.}, 52(1):44--70, 2014.

\bibitem{mot2022vL}
J.~von Lindheim.
\newblock Approximative algorithms for multi-marginal optimal transport and
  free-support wasserstein barycenters.
\newblock {\em arXiv preprint arXiv:2202.00954}, 2022.

\bibitem{LOT13WSBOR}
W.~Wang, D.~Slep{\v{c}}ev, S.~Basu, J.~A. Ozolek, and G.~K. Rohde.
\newblock A linear optimal transportation framework for quantifying and
  visualizing variations in sets of images.
\newblock {\em Int. J. Comput. Vis.}, 101(2):254--269, 2013.

\bibitem{welzl91W}
E.~Welzl.
\newblock Smallest enclosing disks (balls and ellipsoids).
\newblock In {\em New results and new trends in computer science ({G}raz,
  1991)}, volume 555 of {\em Lecture Notes in Comput. Sci.}, pages 359--370.
  Springer, Berlin, Graz, AT, 1991.

\bibitem{LJDK21linear}
L.~Yang, J.~Li, D.~Sun, and K.-C. Toh.
\newblock A fast globally linearly convergent algorithm for the computation of
  {W}asserstein barycenters.
\newblock {\em J. Mach. Learn. Res.}, 22(21):1--37, 2021.

\bibitem{YWWL17badmm}
J.~Ye, P.~Wu, J.~Z. Wang, and J.~Li.
\newblock Fast discrete distribution clustering using {W}asserstein barycenter
  with sparse support.
\newblock {\em IEEE Trans. Signal Process.}, 65(9):2317--2332, 2017.

\bibitem{frechet19procrustes}
Y.~Zemel and V.~M. Panaretos.
\newblock Fr\'{e}chet means and {P}rocrustes analysis in {W}asserstein space.
\newblock {\em Bernoulli}, 25(2):932--976, 2019.

\end{thebibliography}
%------------------------------------------------------------------

\end{document}